\numberwithin{equation}{section}
\theoremstyle{plain}
\newtheorem{theorem}{Theorem}
\newtheorem{lemma}[theorem]{Lemma}
\newtheorem{corollary}[theorem]{Corollary}
\theoremstyle{definition}
\theoremstyle{remark}
\newtheorem{remark}{Remark}
\renewcommand{\Re}{\operatorname{Re}}
\renewcommand{\Im}{\operatorname{Im}}
\newcommand{\vol}{\operatorname{vol}}
\newcommand{\supp}{\operatorname{supp}}
\newcommand{\sym}{\operatorname{sym}}
\newcommand{\N}{\operatorname{N}}
\newcommand{\SL}{\operatorname{SL}}
\newcommand{\PSL}{\operatorname{PSL}}
\renewcommand{\mod}{\operatorname{mod}\ }
\newcommand{\dd}{\mathrm{d}}
\def\@tocline#1#2#3#4#5#6#7{\relax
  \ifnum #1>\c@tocdepth 
  \else
    \par \addpenalty\@secpenalty\addvspace{#2}%
    \begingroup \hyphenpenalty\@M
    \@ifempty{#4}{%
      \@tempdima\csname r@tocindent\number#1\endcsname\relax
    }{%
      \@tempdima#4\relax
    }%
    \parindent\z@ \leftskip#3\relax \advance\leftskip\@tempdima\relax
    \rightskip\@pnumwidth plus4em \parfillskip-\@pnumwidth
    #5\leavevmode\hskip-\@tempdima
      \ifcase #1
       \or\or \hskip 1em \or \hskip 2em \else \hskip 3em \fi%
      #6\nobreak\relax
    \hfill\hbox to\@pnumwidth{\@tocpagenum{#7}}\par
    \nobreak
    \endgroup
  \fi}
\begin{document}

\title[Sup-norm and nodal domains of dihedral Maass forms]
{Sup-norm and nodal domains of dihedral Maass forms}
\author{Bingrong Huang}
\address{School of Mathematical Sciences \\ Tel Aviv University \\ Tel Aviv \\ Israel}
\email{bingronghuangsdu@gmail.com}
\date{\today}

\begin{abstract}
  In this paper, we improve the sup-norm bound and the lower bound of the number of nodal domains for dihedral Maass forms, which are  a distinguished sequence of Laplacian eigenfunctions on an arithmetic hyperbolic surface.
  More specifically,
  let $\phi$ be a dihedral Maass form with spectral parameter $t_\phi$, then we prove that $\|\phi\|_\infty \ll t_\phi^{3/8+\varepsilon} \|\phi\|_2$, 
  which is an improvement over the bound $t_\phi^{5/12+\varepsilon} \|\phi\|_2$ given by Iwaniec and Sarnak.
  As a consequence, 
  we get a better lower bound for the number of nodal domains intersecting a fixed geodesic segment under the Lindel\"{o}f Hypothesis.
  Unconditionally, we prove that the number of nodal domains grows faster than $t_\phi^{1/8-\varepsilon}$ for any $\varepsilon>0$ for almost all dihedral Maass forms.
\end{abstract}

\keywords{dihedral Maass form, sup-norm, nodal domain, 
amplification, second moment, real quadratic field, Hecke L-function}

\thanks{The work was supported  by the European Research Council, under the European Union's Seventh
Framework Programme (FP7/2007-2013)/ERC grant agreement n$^{\text{o}}$~320755.}

\maketitle

\section{Introduction} \label{sec:Intr}

\subsection{Sup-norm}

The distribution of mass of an eigenfunction of
the Laplacian on a Riemannian surface $X$
has received a lot of attention in the context of quantum chaos.  
Berry \cite{berry1977rugular} suggested that eigenfunctions for chaotic systems are modeled by random waves.
In particular, one would like to compare the sup-norm $\|\phi\|_\infty$ of an $L^2$-normalized eigenfunction $\phi$ with the corresponding quantity for random waves, which grows very slowly, as $\sqrt{\log \lambda_\phi}$, if $\lambda_\phi$ is the corresponding eigenvalue (see Salem--Zygmund \cite[Ch. IV]{SalemZygmund1954properties}).
The bound $\|\phi\|_\infty \ll \lambda_\phi^{1/4}$ is valid on any compact Riemannian surface (see Seeger--Sogge \cite{SeegerSogge1989bounds}), which is sharp for standard $2$-sphere.
However, this bound is not optimal for most surfaces.
Especially, 
Sarnak \cite{sarnak1995arithmetical} conjectured that, for compact surfaces of negative curvature, $\|\phi\|_\infty \ll \lambda_\phi^\varepsilon$ for all $\varepsilon>0$.

A related issue is whether such eigenfunctions have quantum uniquely ergodicity (QUE), that is for the sequence of eigenfunctions $\{\phi_j\}_{j\geq1}$, if we have
\[
  \lim_{j\rightarrow\infty} \langle A \phi_j ,\phi_j \rangle_X = \int_{S^* X} \sigma_A
\]
for all 0th order pseudodifferential operators $A$ with principal symbol $\sigma_A$.
Here $S^* X$ is the unit cotangent bundle.
See \cite{sarnak1995arithmetical} and \cite{RudnickSarnak1994behaviour} for more details.

The first breakthrough of the sup-norm problem 
was achieved by Iwaniec--Sarnak \cite{IwaniecSarnak1995norms}, who proved that for certain arithmetic (compact) hyperbolic surfaces and for Hecke eigenforms $\phi$, we have
\begin{equation}\label{eqn:IS}
  \|\phi\|_\infty \ll_\varepsilon \lambda_\phi^{5/24+\varepsilon}.
\end{equation}
This is to be compared with the ``local Weyl law'', which gives $\lambda_\phi^{1/4} /\log \lambda_\phi$ for any negative curvature surface (see \cite{berard1977wave}).
Iwaniec--Sarnak \cite{IwaniecSarnak1995norms} also proved the same bounds \eqref{eqn:IS} for the modular surface, which is non-compact but of finite volume.
To be specific, we consider the 
arithmetic surface $\mathbb{X}=\Gamma_0(q)\backslash\mathbb{H}$, where
$$\Gamma_0(q)=\{\left(\begin{smallmatrix}
                   a & b \\
                   c & d
                 \end{smallmatrix}\right)\in\SL_2(\mathbb{Z}): c\equiv0 \ (\mod q)\}
$$
is the Hecke congruence subgroup,
and $\mathbb H$ is the Poincar\'{e} upper half-plane equipped with its hyperbolic metric $\dd s= \sqrt{\dd x^2 + \dd y^2}/y$ and associated measure $\dd x\dd y/y^2$. Then \eqref{eqn:IS} holds
for a weight zero Hecke--Maass cusp form $\phi$ of level 1 (i.e. $q=1$).
For experiments of arithmetical quantum chaos in this context, see \cite{HejhalRackner1992topography}.
See also
\cite{BlomerHolowinsky2010bounding,templier2010sup,
HarcosTemplier2012sup,HarcosTemplier2013sup,templier2015hybrid,saha2017hybrid} 
for sup-norm bounds in other aspects of Hecke--Maass forms.

\medskip

In this paper we deal with a distinguished sequence of eigenfunctions, called dihedral Maass forms. These give a sparse subsequence of all eigenfunctions, for which we can improve on the result of Iwaniec and Sarnak.
The number of such eigenfunctions with eigenvalue up to $X$ grows like $c\sqrt{X}$ for some constant $c$, compared to the count for all eigenfunctions, which by Weyl's law is about $c'X$ for some constant $c'$.
See \cite{HejhalStromergsson2001quantum} for experiments of arithmetical quantum chaos of dihedral Maass forms.
In more detail, let $K=\mathbb{Q}(\sqrt{q})$ be a fixed real quadratic field with discriminant $q>0$ squarefree and $q\equiv1$ (mod 4).
For simplicity, we assume that $K$ has the narrow class
number 1 and $q>8$ is a prime.
It is conjectured that there are infinitely many such $q$'s. For example, we may take
$q = 13, 17, 29, 37, 41, 53,$ and so on.
Let $\omega_q=\frac{1+\sqrt{q}}{2}$ and let $\epsilon_q$ be the fundamental unit of $K$.
The ring of integers of $K$ is $\mathcal{O}_K=\mathbb{Z}[\omega_q]$,
and the group of units $U_K$ in $\mathcal{O}_K$ is isomorphic to $\{\pm1\}\times \epsilon_q^{\mathbb{Z}}$.
For integer $k\neq0$, we have the
Hecke Gr\"{o}ssencharacter $\Xi_k$ of $K$ defined by
\[
  \Xi_k((\alpha)) := \Big|\frac{\alpha}{\tilde{\alpha}}\Big|^{\frac{\pi i k}{\log \epsilon_q}} \quad
  \textrm{for ideal $(\alpha)\subset \mathcal O_K$ with generator $\alpha$,}
\]
where $\tilde{\alpha}$ is the conjugate of $\alpha$ under the nontrivial automorphism of $K$.
By \cite{maass1949uber}, we know that the theta-like series associated to $\Xi_k$ by
\[
  \phi_k(z) := y^{1/2} \sum_{\beta\in\mathcal{O}_K/\pm U^+}
  \Xi_k((\beta)) K_{\frac{ik\pi}{\log \epsilon_q}}(2\pi |\N(\beta)|y) e(\N(\beta)x)
\]
is a Hecke--Maass cusp form on $\Gamma_0(q)$ of weight 0,
eigenvalue $\frac{1}{4}+(\frac{k\pi}{\log \epsilon_q})^2$,
and with nebentypus character $\chi_q$ (the Kronecker symbol).
Here $U^+$ denotes the group of totally positive units of $\mathcal{O}_K$,
$\N(\beta)=\beta\tilde{\beta}$ is the norm of $\beta$, $K_\nu(z)$ is the modified Bessel function, and $e(x)=e^{2\pi ix}$.
Since $\N(\epsilon_q)=-1$, we know $\phi_k(z)$ is even.
Denote
\[ T_k := t_{\phi_k} = \frac{\pi k}{\log \epsilon_q}. \]
Note that $\phi_k(z)$ is real since we may write
$$
  \phi_k(z) = 2y^{1/2}\sum_{n=1}^{\infty} a_{k}(n) K_{iT_k}(2\pi ny) \cos(2\pi nx),
$$
where $a_{k}(n):=\sum_{\N((\beta))=n}\Xi_k((\beta))$ is real.
Note that $\phi_k$ is Hecke normalized, since $a_k(1)=1$.
By the Rankin--Selberg method, we obtain (see \cite{HoffsteinLockhart1994coefficients})
\begin{equation}\label{eqn:L^2}
  \|\phi_k\|_2^2 =  \int_{\Gamma_0(q)\backslash\mathbb{H}} |\phi_k(z)|^2 \frac{\dd x\dd y}{y^2} =  (q T_k)^{o(1)} e^{-\pi T_k}.
\end{equation}

\medskip
Our first main result is the following theorem.
\begin{theorem}\label{thm:sup-norm}
  With the notation as above, we have
  \[ \|\phi_k\|_\infty \ll_{q,\varepsilon} T_k^{3/8+\varepsilon}\|\phi_k\|_2 . \]
  That is, we have $\phi_k(z) \ll_{q,\varepsilon} T_k^{3/8+\varepsilon} \|\phi_k\|_2$, which is uniform in $z\in\mathbb{H}$.
\end{theorem}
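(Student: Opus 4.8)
The plan is to bound $\phi_k(z)$ directly from its Fourier--Bessel expansion
\[
  \phi_k(z) = 2y^{1/2}\sum_{n=1}^{\infty} a_k(n) K_{iT_k}(2\pi ny)\cos(2\pi nx),
\]
reducing the sup-norm to a cancellation estimate for a character sum over ideals of $K$. The same core sum underlies the amplification argument of Iwaniec--Sarnak; the improvement beyond their exponent $5/12$ will come from exploiting the arithmetic of the real quadratic field in estimating it.

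\emph{Step 1 (normalization and localization).} By \eqref{eqn:L^2} it suffices to bound $e^{\pi T_k/2}|\phi_k(z)|$ by $T_k^{3/8+\varepsilon}$ uniformly in $z$. I would insert the uniform asymptotics of $K_{iT_k}(2\pi ny)$: it is $O(e^{-AT_k})$ for $2\pi ny>(1+\delta)T_k$, of Airy type in the transition range $|2\pi ny-T_k|\ll T_k^{1/3}$, and of size $e^{-\pi T_k/2}(T_k^2-(2\pi ny)^2)^{-1/4}$ times an oscillating factor $\cos\Phi_n$ in the allowed range $0<2\pi ny<(1-\delta)T_k$. High in the cusp only $O(1)$ Fourier terms survive and one gets $\ll T_k^{1/6+\varepsilon}$; after moving $z$ into a fundamental domain and handling each cusp (the implied constants absorb the fixed level $q$), I may therefore assume $y\asymp 1$, so that the effective $n$-range is $n\le N:=T_k/(2\pi y)\asymp T_k$. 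The transition terms, of which there are $\asymp T_k^{1/3}$ carrying normalized weight $\ll T_k^{-1/3}$, contribute only $O(T_k^\varepsilon)$ by Cauchy--Schwarz together with $\sum_{n\le x}|a_k(n)|^2\ll x^{1+\varepsilon}$.

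\emph{Step 2 (the core sum).} It remains to bound, uniformly in $x$ and in the smooth Bessel phase, a sum of the shape $\sum_{n\le M}a_k(n)\cos(2\pi nx)\cos\Phi_n$ with $M\asymp T_k$ and normalized weight $\asymp T_k^{-1/2}$ in the bulk; the trivial bound $M\asymp T_k$ recovers $T_k^{1/2}$, the Iwaniec--Sarnak exponent needs a saving of $T_k^{1/12}$, and the target $T_k^{3/8}$ needs the stronger saving $T_k^{1/8}$, i.e. $\sum_{n\le M}a_k(n)\cos(2\pi nx)\cos\Phi_n\ll M^{7/8+\varepsilon}$. The new input is that $a_k(n)=\sum_{\N\mathfrak a=n}\Xi_k(\mathfrak a)$, so opening the cosines into exponentials rewrites this as a sum over ideals,
\[
  \sum_{\mathfrak a}\Xi_k(\mathfrak a)\, e(\pm x\,\N\mathfrak a)\, e(\pm\Phi_{\N\mathfrak a}/2\pi).
\]
Since $K$ has narrow class number one I parametrize ideals by totally positive generators $\beta$ modulo totally positive units, i.e. by lattice points $(\beta,\tilde\beta)$ in a fundamental box for the $\epsilon_q$-action, with $\N\mathfrak a=\beta\tilde\beta$ and $\Xi_k(\mathfrak a)=\exp\bigl(iT_k\log|\beta/\tilde\beta|\bigr)$.

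\emph{Step 3 (extracting the saving; the main obstacle).} This is now a two-dimensional lattice sum whose total phase is the sum of the additive phase $2\pi x\,\beta\tilde\beta$, the Gr\"ossencharacter phase $T_k\log|\beta/\tilde\beta|$, and the Bessel phase $\Phi_{\beta\tilde\beta}$, all of size $\asymp T_k$. I would apply Poisson summation in the two lattice variables and analyze the resulting oscillatory integrals by stationary phase, the dual frequencies together with the constraint $\N\mathfrak a\le M$ cutting the dual sum down and producing the saving; equivalently, one may express the sum as a contour integral of the Hecke $L$-function $L(s,\Xi_k)=\sum_n a_k(n)n^{-s}$ against the Mellin transform of the Bessel--character weight, shift to the critical line, and invoke the second-moment bound $\int_{|t|\le T_k^{\varepsilon}}|L(\tfrac12+it,\Xi_k)|^2\,\dd t\ll T_k^{\varepsilon}$, which follows from the functional equation and an approximate functional equation for the Hecke $L$-function of $K$. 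I expect the crux to be precisely this step: controlling the three interacting phases with full uniformity in the large archimedean parameter $T_k\asymp k$ (dually, establishing the second moment of $L(s,\Xi_k)$ with the conductor growing in $T_k$), the delicate point being that the stationary points may collide with the Bessel turning point. Granting the saving of $T_k^{1/8}$, assembling the cusp, transition and bulk contributions and optimizing yields $\|\phi_k\|_\infty\ll_{q,\varepsilon}T_k^{3/8+\varepsilon}\|\phi_k\|_2$.
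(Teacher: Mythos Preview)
Your approach is genuinely different from the paper's, but Step~3 contains a real gap, and that gap is exactly the content of the theorem.

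First, the contour--integral route does not work as stated. The sum you need carries the additive twist $e(\pm nx)$, so its Mellin transform is not $L(s,\Xi_k)$ but a sum $\sum_n a_k(n)e(nx)n^{-s}$, which has no useful functional equation for generic $x$. Even setting this aside, the second-moment input you invoke,
\[
  \int_{|t|\le T_k^{\varepsilon}} \bigl|L(\tfrac12+it,\Xi_k)\bigr|^2\,\dd t \ll T_k^{\varepsilon},
\]
is not a consequence of the approximate functional equation. For fixed $k$ and $|t|$ small the analytic conductor is $\asymp T_k^2$, so the approximating Dirichlet polynomial has length $\asymp T_k$; the mean-value theorem over a $t$-window of length $T_k^{\varepsilon}$ then gives only $\ll T_k^{1+\varepsilon}$, which recovers nothing beyond convexity. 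What you have written is Lindel\"of on average over an interval of length $T_k^{\varepsilon}$, which is not known. (The second-moment result the paper does prove, Theorem~\ref{thm:second_moment}, averages over $k$, not over $t$ for a single $k$, and is used for a different purpose.)

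Second, the Poisson/stationary-phase sketch is too coarse to produce the specific saving $T_k^{1/8}$. The two-variable sum over $(\beta,\tilde\beta)$ has three interacting phases of size $T_k$, and you yourself flag the collision of stationary points with the Bessel turning point; this is precisely where a naive van der Corput or $B$-process loses the game. Note also that after unfolding, bounding $\sum_{n\le M}a_k(n)e(nx)\cos\Phi_n$ by $M^{7/8+\varepsilon}$ uniformly in $x$ and in the Bessel phase is essentially equivalent to the sup-norm bound itself, so Step~3 is not a reduction but a restatement.

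For comparison, the paper does \emph{not} attack the Fourier expansion directly. It runs the Iwaniec--Sarnak amplified pre-trace formula and gains over the exponent $5/12$ by building an amplifier supported \emph{only on primes}. The point is that for dihedral forms one has the factorization $L(s,\sym^2\phi_k)=\zeta(s)L(s,\phi_{2k})$, and Coleman's zero-free region for $L(s,\phi_{2k})$ then yields the lower bound $\sum_{p\sim N}a_k(p)^2(\log p)\gg N$ as soon as $\log N\gg(\log T_k)^{2/3+\delta}$ (Lemma~\ref{lemma:amplifier}). A prime-supported amplifier lets one use the sharper lattice-point counts of Templier/Harcos--Templier/Blomer--Harcos--Maga--Mili\'cevi\'c (Lemmas~\ref{lemma:M_*}--\ref{lemma:M_p}), and the choice $N=T_k^{1/4}$ balances the terms to give $T_k^{3/8+\varepsilon}$. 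The special arithmetic of dihedral forms enters through the \emph{amplifier}, not through the exponential sum you isolate.
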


Define $L^p$-norm of $\phi_k$ by ($p\geq1$)
\[
  \|\phi_k\|_p := \Big( \int_{\Gamma_0(q)\backslash\mathbb{H}} |\phi_k(z)|^p \frac{\dd x\dd y}{y^2} \Big)^{1/p}.
\]
In \cite{luo2014norms}, Luo proved a sharp upper bound for $L^4$-norm
\[
  \|\phi_k\|_4 \ll_{q,\varepsilon} T_k^\varepsilon \|\phi_k\|_2.
\]
Together with our sup-norm bound, we get the following result.

\begin{corollary}
  We have
  \[
    \frac{\|\phi_k\|_p}{\|\phi_k\|_2}
    \ll_{q,\varepsilon} \left\{\begin{array}{ll}
                               T_k^\varepsilon, & \textrm{if $2<p\leq 4$,} \\
                               T_k^{3(1-4/p)/8+\varepsilon}, & \textrm{if $p>4$.}
                             \end{array}\right.
  \]
\end{corollary}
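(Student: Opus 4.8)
The plan is to deduce the corollary from Theorem~\ref{thm:sup-norm} and Luo's $L^4$-bound by the standard logarithmic convexity of $L^p$-norms, i.e.\ H\"older interpolation. Recall that on any measure space, if $0<p_0<p<p_1\le\infty$ and $\frac1p=\frac{1-\theta}{p_0}+\frac{\theta}{p_1}$ with $\theta\in[0,1]$, then
\[
  \|f\|_p \le \|f\|_{p_0}^{1-\theta}\,\|f\|_{p_1}^{\theta}.
\]
I would apply this with $f=\phi_k$ on $\Gamma_0(q)\backslash\mathbb{H}$ equipped with the finite measure $\dd x\,\dd y/y^2$; since $\phi_k$ is a cusp form it decays rapidly in the cusp, so all the $L^p$-norms in play are finite and the interpolation is legitimate.

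First I would treat the range $2<p\le4$ by interpolating between the endpoints $p_0=2$ and $p_1=4$. Writing $\frac1p=\frac{1-\theta}{2}+\frac{\theta}{4}$ with $\theta\in[0,1]$, the inequality above together with Luo's bound $\|\phi_k\|_4\ll_{q,\varepsilon}T_k^\varepsilon\|\phi_k\|_2$ gives
\[
  \|\phi_k\|_p \le \|\phi_k\|_2^{1-\theta}\|\phi_k\|_4^{\theta} \ll_{q,\varepsilon} T_k^{\theta\varepsilon}\|\phi_k\|_2 \ll_{q,\varepsilon} T_k^{\varepsilon}\|\phi_k\|_2,
\]
where the last step uses $\theta\le1$. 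This is precisely the first case.

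Next, for $p>4$ I would interpolate between $p_0=4$ and $p_1=\infty$, so that $\frac1p=\frac{1-\theta}{4}$, i.e.\ $\theta=1-\frac4p$ and $1-\theta=\frac4p$. Feeding in Luo's bound together with the sup-norm bound $\|\phi_k\|_\infty\ll_{q,\varepsilon}T_k^{3/8+\varepsilon}\|\phi_k\|_2$ from Theorem~\ref{thm:sup-norm} yields
\[
  \|\phi_k\|_p \le \|\phi_k\|_4^{4/p}\,\|\phi_k\|_\infty^{1-4/p} \ll_{q,\varepsilon} \big(T_k^{\varepsilon}\big)^{4/p}\big(T_k^{3/8+\varepsilon}\big)^{1-4/p}\|\phi_k\|_2,
\]
and the exponent of $T_k$ collapses to $\tfrac38\big(1-\tfrac4p\big)+\varepsilon\big(\tfrac4p+1-\tfrac4p\big)=\tfrac38\big(1-\tfrac4p\big)+\varepsilon$, which is the second case.

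The point worth emphasising is that there is essentially no analytic obstacle here: the entire content of the corollary sits in the two inputs, namely Theorem~\ref{thm:sup-norm} and Luo's sharp $L^4$-bound, combined with the $L^2$-normalisation. The only care needed is the routine bookkeeping of the interpolation exponents — in particular checking that the auxiliary $\varepsilon$ is preserved rather than amplified in both regimes, which the displays above confirm.
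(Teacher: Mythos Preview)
Your proof is correct and is precisely the intended argument: the paper does not spell out a proof of this corollary, but it states immediately before it that the result follows by combining Theorem~\ref{thm:sup-norm} with Luo's sharp $L^4$-bound, which is exactly the H\"older interpolation you carry out.
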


It is instructive to compare our results for dihedral Maass forms with those for Eisenstein series.
The key point here (from the automorphic representation theoretic point of view) is that if $K$ is a quadratic etale algebra over $\mathbb Q$ and $\chi$ is a Hecke character of $K$, then the principle of automorphic induction gives us an automorphic representation of ${\rm GL}_2(\mathbb Q)$. If $K$ is split, i.e., $K = \mathbb Q \times \mathbb Q$, then this leads to an Eisenstein series. On the other hand if $K$ is a field, then we end up getting a cusp form; the sup-norm in the latter case is exactly what we consider in this paper (under additional restrictions), while the sup-norm in the former case has been considered by several others in recent years, starting with the paper of Young.
By using the method of Iwaniec--Sarnak, Young \cite{young2018note}
established that for $\Omega$ a fixed compact subset of $\mathbb H$, and $T\geq 1$,
\begin{equation*}
  \max_{z\in\Omega} |E(z,1/2+iT)| \ll_{\Omega,\varepsilon} T^{3/8+\varepsilon},
\end{equation*}
where $E(z,s)$ is the usual real-analytic Eisenstein series for the group $\PSL_2(\mathbb Z)$.
This was improved by Blomer \cite{blomer2016epstein} to
\[
  \max_{z\in\Omega} |E(z,1/2+iT)| \ll_{\Omega,\varepsilon} T^{1/3+\varepsilon},
\]
by using a method of Titchmarch.
%
Recently, Huang--Xu \cite{HuangXu2017sup-norm} proved
\[
  E(z,1/2+iT) = c_0(y,1/2+iT) + O(T^{3/8+\varepsilon}),
\]
if $y\gg1$, where $c_0(y,s)$ is the constant term in the Fourier expansion of $E(z,s)$; Assing \cite{Assing2017} extended to the number fields case.
To prove our results, we will follow the approach in  \cite{HuangXu2017sup-norm}.

\subsection{Nodal domains}

Let $\phi$ be a Hecke--Maass cusp form on $\mathbb{X}$.
Let $Z_\phi$ be the zero set of $\phi$,
which is a finite union of real analytic curves.
For any subset $C \subseteq \mathbb{X}$, let $N^C\left(\phi\right)$ be the number of connected components (the nodal domains) in $\mathbb{X} \backslash Z_\phi$ which intersect $C$. Let $N\left(\phi\right)=N^\mathbb{X}\left(\phi\right)$.

In \cite{BogomolnySchmit2002percolation}, Bogomolny--Schmit estimated the expected number of nodal domains of random waves using a percolation like model. In view of Berry's conjecture, 
results in \cite{BogomolnySchmit2002percolation} suggests the existence of a constant $c>0$ such that
\begin{equation}\label{eqn:nodal}
N\left(\phi\right) \sim c \lambda_\phi. 
\end{equation}
In \cite{ns}, Nazarov--Sodin examined \eqref{eqn:nodal} for random spherical harmonics, and they proved the existence of $c>0$ such that \eqref{eqn:nodal} holds almost surely as $\lambda_\phi \to +\infty$.
Note that it is not true for a general Riemannian surface that the number of nodal domains of an eigenfunction must increase with the eigenvalue \cite{st,lewy}.

Let $\sigma : \SL_2(\mathbb{Z})\backslash \mathbb{H} \to \SL_2(\mathbb{Z})\backslash \mathbb{H}$ be an orientation reversing isometric involution induced by $x+iy \mapsto -x+iy$ on $\mathbb{H}$.
Let the arc $\delta=\{z\in \SL_2(\mathbb{Z})\backslash \mathbb{H}:\sigma(z)=z\}$ which is fixed by $\sigma$. In \cite{GhoshReznikovSarnak2013nodal}, Ghosh--Reznikov--Sarnak studied nodal domains crossing $\delta$
and proved
\begin{equation}\label{capt}
  t_\phi \ll N^\delta\left(\phi\right) \ll t_\phi \log t_\phi
\end{equation}
for even Hecke--Maass cusp forms $\phi$. Here $t_\phi>0$ is the spectral parameter of $\phi$ with $\lambda_\phi=1/4+t_\phi^2$. Assuming that \eqref{eqn:nodal} is true, this estimate in particular implies that almost all nodal domains do not touch $\delta$.
In order to prove the lower bound in  \eqref{capt}, the authors produced sign changes of Hecke--Maass cusp forms high in the cusp, $t_\phi >y > t_\phi/100$.
For nodal domains intersecting a fixed sufficiently long geodesic segment $\beta \subset \{iy:y\geq1\}$ or $\beta\subset \{1/2+iy:y\geq\sqrt{3}/2\}$, they proved
\[
  N^\beta\left(\phi\right) \gg_\varepsilon t_\phi^{1/12-\varepsilon},
\]
by assuming the Lindel{\"o}f Hypothesis for the L-functions $L\left(s,\phi\right)$.
Recently in Jang--Jung \cite{JangJung2018quantum}, they showed $\lim_{t_\phi \to \infty} N^\beta\left(\phi\right) = +\infty$ without any assumptions. However no quantitative lower bound is given in \cite{JangJung2018quantum}.

\medskip
In this paper, we will consider the family of dihedral Maass forms $\phi_k$ with $K<k\leq 2K$. Let $L(s,\phi_k)$ be the Hecke L-function of $\phi_k$.
As a consequence of Theorem \ref{thm:sup-norm}, we have the following result.
\begin{corollary}
  Let $\beta \subset \left\{iy:y>0\right\}$ be any fixed compact geodesic segment.
  Assume the Lindel{\"o}f Hypothesis for the $L$-functions $L\left(s,\phi_k\right)$.  Then for any $\varepsilon>0$, we have
  \[
    |Z_{\phi_k} \cap \beta| \gg_{q,\varepsilon} T_k^{1/8-\varepsilon}
    \quad \textrm{and} \quad
    N^\beta(\phi_k) \gg_{q,\varepsilon} T_k^{1/8-\varepsilon} .
  \]
\end{corollary}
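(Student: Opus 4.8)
The plan is to work entirely on the imaginary axis and to reduce the corollary to a lower bound for the number of sign changes of the single-variable function $g_k(y):=\phi_k(iy)$ on the compact $y$-interval cut out by $\beta$. Since $\phi_k(z)=2y^{1/2}\sum_{n\ge1}a_k(n)K_{iT_k}(2\pi ny)\cos(2\pi nx)$ is even in $x$, the imaginary axis is an axis of symmetry of $\phi_k$, and $g_k(y)=2y^{1/2}\sum_n a_k(n)K_{iT_k}(2\pi ny)$ is real analytic. Each transversal sign change of $g_k$ forces a zero of $\phi_k$ on $\beta$, so it contributes to $|Z_{\phi_k}\cap\beta|$; moreover, exactly as in Ghosh--Reznikov--Sarnak \cite{GhoshReznikovSarnak2013nodal}, a sign change on the symmetry axis separates two nodal domains meeting $\beta$, so the number of sign changes also bounds $N^\beta(\phi_k)$ from below. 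It therefore suffices to show that $g_k$ has $\gg_{q,\varepsilon}T_k^{1/8-\varepsilon}$ sign changes on that interval.

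The first input I would use is the sup-norm bound of Theorem \ref{thm:sup-norm}, giving $\|g_k\|_{L^\infty}\le\|\phi_k\|_\infty\ll_{q,\varepsilon}T_k^{3/8+\varepsilon}\|\phi_k\|_2$. The second input records the arithmetic of the restriction. Taking the Mellin transform along the axis and using the standard integral for $\int_0^\infty y^{w}K_{iT_k}(2\pi ny)\,\dd y/y$, one finds
\[
  \int_0^\infty g_k(y)\,y^{-s}\,\frac{\dd y}{y}=\gamma(s,T_k)\,L(\tfrac12-s,\phi_k),
\]
where $\gamma(s,T_k)$ is an explicit product of two Gamma factors with shifts $\pm iT_k$. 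Passing to the variable $u=\log y$ and $s=i\tau$, this identifies the Fourier transform (in $u$) of the restriction with $\gamma(i\tau,T_k)L(\tfrac12-i\tau,\phi_k)$; the gamma factor is of essentially constant size for $|\tau|<T_k$ and decays rapidly for $|\tau|>T_k$, so the spectrum of $g_k$ is localized to frequencies $|\tau|\lesssim T_k$. A Rankin--Selberg computation of the diagonal then shows that the restricted second moment satisfies $\int_\beta g_k^2\,\dd s\gg T_k^{-\varepsilon}\|\phi_k\|_2^2$, so the mass of $g_k$ along $\beta$ is comparable to the global $L^2$-mass.

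With these two inputs I would deduce the corollary from the sign-change mechanism of \cite{GhoshReznikovSarnak2013nodal}: a real function on a fixed interval whose spectrum is supported in $|\tau|\le T_k$, whose $L^2$-mass is $\|g_k\|_{L^2(\beta)}$, and whose sup-norm is $\|g_k\|_{L^\infty}$, must change sign at least
\[
  \gg T_k^{1/2}\,\frac{\|g_k\|_{L^2(\beta)}}{\|g_k\|_{L^\infty}}
\]
times, the factor $T_k^{1/2}$ being the square root of the maximal oscillation frequency. Substituting $\|g_k\|_{L^2(\beta)}\gg T_k^{-\varepsilon}\|\phi_k\|_2$ and $\|g_k\|_{L^\infty}\ll T_k^{3/8+\varepsilon}\|\phi_k\|_2$ yields $\gg T_k^{1/2-3/8-\varepsilon}=T_k^{1/8-\varepsilon}$ sign changes, the normalizing factors $\|\phi_k\|_2$ cancelling; the governing bookkeeping is $\tfrac18=\tfrac12-\tfrac38$, mirroring the relation $\tfrac1{12}=\tfrac12-\tfrac5{12}$ for the Iwaniec--Sarnak exponent.

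The main obstacle is the sign-change step. A naive argument controlling only $\|g_k\|_{L^2(\beta)}$ and $\|g_k\|_{L^\infty}$ is too weak, since the $L^2$-mass could in principle concentrate on a few tall humps on which $g_k$ keeps one sign; ruling this out is precisely where the Lindel\"of Hypothesis enters. Through the Mellin identity, the pointwise bound $|L(\tfrac12+i\tau,\phi_k)|\ll T_k^\varepsilon$ for $|\tau|\lesssim T_k$ forbids the Fourier mass of $g_k$ from concentrating at any single frequency, forcing it to spread across the full band $|\tau|\asymp T_k$ and hence producing genuine high-frequency oscillation. Combining this spectral spread with the sup-norm bound gives the stated number of sign changes, and therefore the asserted lower bounds for $|Z_{\phi_k}\cap\beta|$ and $N^\beta(\phi_k)$.
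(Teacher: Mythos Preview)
Your outline assembles the right ingredients—the Mellin identity linking $g_k(y)=\phi_k(iy)$ to $L(\tfrac12+it,\phi_k)$, the sup-norm bound of Theorem~\ref{thm:sup-norm}, and a restricted $L^2$ lower bound—but the sign-change step is mis-stated, and the fix you propose (``Lindel\"of forces spectral spread'') is too vague to be a proof.

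The displayed inequality $S_\beta(\phi_k)\gg T_k^{1/2}\,\|g_k\|_{L^2(\beta)}/\|g_k\|_{L^\infty}$ is not a theorem about band-limited functions (a Gaussian bump of width $T_k^{-1/2}$ already violates it), and it is not what Ghosh--Reznikov--Sarnak actually prove. Their mechanism, followed verbatim in \S\ref{sec:nodaldomains}, goes through the concrete quantity
\[
  M(\phi_k)=\frac{1}{\|\phi_k\|_2}\,\sup_{a<\alpha_1<\alpha_2<b}\Big|\int_{\alpha_1}^{\alpha_2}\phi_k(iy)\,\frac{\dd y}{y}\Big|.
\]
Splitting $\int_a^b|g_k|\,\dd y/y$ at the sign changes gives the elementary inequality $\|g_k\|_{L^1(\beta)}\le (S_\beta(\phi_k)+1)\,M(\phi_k)\,\|\phi_k\|_2$; combined with $\|g_k\|_{L^1(\beta)}\ge\|g_k\|_{L^2(\beta)}^2/\|\phi_k\|_\infty$ this yields
\[
  S_\beta(\phi_k)\;\gg\;\frac{\|g_k\|_{L^2(\beta)}^2}{M(\phi_k)\,\|\phi_k\|_2\,\|\phi_k\|_\infty}.
\]
The role of Lindel\"of is then entirely explicit: inserting $|L(\tfrac12+it,\phi_k)|\ll T_k^{\varepsilon}$ into your Mellin identity and applying Stirling (cf.\ \eqref{eqn:M=M1+M2}) gives $M(\phi_k)\ll T_k^{-1/2+\varepsilon}$. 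That is the true source of the factor $T_k^{1/2}$, not a general ``square root of the bandwidth'' principle. Substituting this together with Theorem~\ref{thm:sup-norm} and $\|g_k\|_{L^2(\beta)}\gg\|\phi_k\|_2$ produces $T_k^{1/8-\varepsilon}$; the passage from $S_\beta$ to $N^\beta$ then uses the topological inequality $N^\beta\ge\tfrac12 S_\beta-g+1$ of \cite{GhoshReznikovSarnak2013nodal,GhoshReznikovSarnak2017nodal}.

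A smaller point: the lower bound $\|g_k\|_{L^2(\beta)}\gg\|\phi_k\|_2$ is not obtained in the paper by a Rankin--Selberg diagonal computation. It comes from QUE for the dihedral sequence (Liu--Ye subconvexity for $L(s,\phi_k\times\phi)$) together with the restriction theorem of Jang--Jung; see Lemma~\ref{lemma:rL2>>}. The route you suggest would require controlling off-diagonal shifted convolution sums, which is a separate nontrivial input.
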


In \cite{jung2016quantitative}, Jung unconditionally proved a lower bound $t_\phi^{1/8-\varepsilon}$ for almost all $\phi$ within the set of even Hecke--Maass cusp forms in $\left\{\phi:|T-t_\phi|<T^{1/3}\right\}$ by quantitative quantum ergodicity and a sharp estimate on the variance of the shifted convolution sums.
%
%
In order to get unconditional results on nodal intersections and nodal domains for the family of dihedral Maass forms, we will need the following second moment of Hecke L-functions.
\begin{theorem}\label{thm:second_moment}
  Let $K\geq2$ be sufficiently large.
  Let 
  $|t|\ll K^{1-\varepsilon}$.
  Then, for any $\varepsilon>0$, we have
  \[
    \frac{1}{K} \sum_{K<k\leq 2K} |L(1/2+it,\phi_k)|^2 \ll_{q,\varepsilon} K^{\varepsilon}.
  \]
\end{theorem}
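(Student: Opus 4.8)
The plan is to combine an approximate functional equation for $L(1/2+it,\phi_k)$ with the explicit arithmetic of the Gr\"ossencharacters $\Xi_k$. Since the $a_k(n)$ are real, the degree-two $L$-function $L(s,\phi_k)=\sum_n a_k(n)n^{-s}$ has, at $s=1/2+it$, analytic conductor of order $q(T_k^2+t^2)$, which under the hypothesis $|t|\ll K^{1-\varepsilon}$ and $T_k\asymp K$ is of order $K^2$ for fixed $q$. I would therefore write
\[
  |L(1/2+it,\phi_k)|^2 = 2\sum_{m,n}\frac{a_k(m)a_k(n)}{\sqrt{mn}}\Big(\frac nm\Big)^{it} V_k\Big(\frac{mn}{qK^2}\Big) + O(K^{-100}),
\]
where $V_k$ is a smooth weight essentially supported on $mn\ll qK^{2+\varepsilon}$ (so $m,n\ll q^{1/2}K^{1+\varepsilon}$) and depending mildly on $k$ through $T_k=\pi k/\log\epsilon_q$.

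Next I would open the Hecke coefficients arithmetically. From $a_k(n)=\sum_{\N\mathfrak b=n}\Xi_k(\mathfrak b)$ together with $\overline{\Xi_k(\mathfrak b)}=\Xi_k(\tilde{\mathfrak b})$, one gets $a_k(m)a_k(n)=\sum_{\N\mathfrak a=m,\ \N\mathfrak b=n}\Xi_k(\mathfrak a\tilde{\mathfrak b})$, and $\Xi_k(\mathfrak c)=e^{2\pi i k\theta(\mathfrak c)}$ where $\theta(\mathfrak c)=\frac{\log|\gamma/\tilde\gamma|}{2\log\epsilon_q}\bmod 1$ for a totally positive generator $\gamma$ of $\mathfrak c$; this is well defined because the narrow class number is $1$. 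Performing the sum over $k$ first, the weight $V_k$ varies on the scale $k\asymp K$, so partial summation yields
\[
  \frac1K\sum_{K<k\le 2K}\Xi_k(\mathfrak c)\,V_k \ll \min\Big(1,\ \frac{1}{K\,\|\theta(\mathfrak c)\|}\Big),
\]
with $\|\cdot\|$ denoting distance to $\mathbb Z$. The diagonal consists of the pairs with $\theta(\mathfrak a\tilde{\mathfrak b})\in\mathbb Z$, i.e.\ $\mathfrak a\tilde{\mathfrak b}$ generated by a rational integer. The principal case $\mathfrak a=\mathfrak b$ (forcing $m=n$) contributes $\ll\sum_{m\ll q^{1/2}K}r(m)/m\ll_{q,\varepsilon}K^\varepsilon$, where $r(m)$ is the number of integral ideals of norm $m$; the sparser genuinely non-principal rational ideals contribute the same order. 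This gives the main term, of size $K^\varepsilon$.

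For the off-diagonal I would use the elementary Diophantine separation that I expect to be the crux of the argument. Writing generators $\gamma=\tfrac{a+b\sqrt q}2$ and $\gamma'=\tfrac{a'+b'\sqrt q}2$, one computes $\gamma\tilde\gamma'-\tilde\gamma\gamma'=\tfrac{(a'b-ab')\sqrt q}2$, so whenever $\theta(\mathfrak c)\neq\theta(\mathfrak c')\bmod 1$ the integer $a'b-ab'$ is nonzero and hence
\[
  \|\theta(\mathfrak c)-\theta(\mathfrak c')\| \gg_q \frac{1}{\sqrt{\N\mathfrak c\cdot \N\mathfrak c'}}.
\]
In particular the values $\theta(\mathfrak a\tilde{\mathfrak b})$ arising with $m\asymp M$, $n\asymp N$, $MN\ll qK^2$ are $\gg_q (MN)^{-1}$-separated, with divisor-bounded ($\ll (MN)^\varepsilon$) multiplicities. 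Feeding this spacing into the bound above, split dyadically in $\|\theta\|$: the number of pairs with $\|\theta(\mathfrak a\tilde{\mathfrak b})\|\asymp\delta$ is $\ll \delta (MN)^{1+\varepsilon}$, so each dyadic box contributes $\ll \frac1K (MN)^{-1/2}\cdot (MN)^{1+\varepsilon}=\frac1K (MN)^{1/2+\varepsilon}\ll_q K^{-1}(qK^2)^{1/2+\varepsilon}$, and summing over the $O(\log^2 K)$ boxes yields $\ll_{q,\varepsilon}K^\varepsilon$, as required.

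The main obstacle is precisely this off-diagonal estimate. The separation $\gg_q (MN)^{-1}$ is of the borderline size $K^{-2}$ at the top of the range $mn\asymp qK^2$, where the number of distinct values $\theta(\mathfrak a\tilde{\mathfrak b})$ is itself of order $K^2$; the argument thus has essentially no room to spare and relies on the spacing being genuinely uniform, together with the divisor bound on multiplicities, rather than on any additional averaging or equidistribution input. A secondary technical point is to keep the $k$-dependence of $V_k$ under uniform control so that the clean bound $\min\!\big(1,1/(K\|\theta\|)\big)$ survives partial summation; here the hypothesis $|t|\ll K^{1-\varepsilon}$ is what pins the analytic conductor at order $K^2$ and forces the weight to vary slowly in $k$.
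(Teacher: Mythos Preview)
Your approach is correct and rests on exactly the same arithmetic input as the paper: the repulsion of angles $\|\theta(\mathfrak c)-\theta(\mathfrak c')\|\gg_q (\N\mathfrak c\,\N\mathfrak c')^{-1/2}$, which is the paper's Lemma~\ref{lemma:angle-gap}. The paper organizes the endgame a little differently. Rather than writing an approximate functional equation for $|L|^2$ and summing the resulting geometric series in $k$ directly, it bounds $L(1/2+it,\phi_k)$ by a single short Dirichlet polynomial in the $\Xi_k(\mathfrak a)$ (Lemma~\ref{lemma:AFE}) and then feeds the angle spacing into the Montgomery--Vaughan large sieve to get a mean-square bound over $k$ (Lemma~\ref{lemma:large_sieve}). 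Your direct diagonal/off-diagonal count is exactly what one obtains by unpacking that large sieve, so the two arguments are equivalent in substance; the paper's packaging buys you the convenience of not having to track the $k$-dependence of the weight $V_k$ through a partial-summation step, while your formulation makes the ``no room to spare'' borderline nature of the off-diagonal more visible.
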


Now we 
state our unconditional result for dihedral Maass forms as an application of our estimates on the sup-norm bounds and the second moment of Hecke L-functions.
\begin{theorem}\label{thm:nodal_domain}
  Let $\beta \subset \left\{iy:y>0\right\}$ be any fixed compact geodesic segment.
  Then for any $\varepsilon>0$, all but $O_{q,\varepsilon}\left(K^{1-\varepsilon}\right)$ dihedral Maass forms with $K<k\leq 2K$ satisfy
  \[
    |Z_{\phi_k} \cap \beta| \gg_{q,\varepsilon} T_k^{1/8-\varepsilon}
    \quad \textrm{and} \quad
    N^\beta(\phi_k) \gg_{q,\varepsilon} T_k^{1/8-\varepsilon} .
  \]
\end{theorem}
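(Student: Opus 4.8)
The plan is to reduce the entire statement to a lower bound on the number of sign changes of the real function $y\mapsto\phi_k(iy)$ along $\beta$, and then to bound this count from below by feeding the sup-norm bound of Theorem~\ref{thm:sup-norm} and the second moment bound of Theorem~\ref{thm:second_moment} into a resonance argument, following the template of the conditional Corollary (in which the Lindel\"of Hypothesis plays the role that Theorem~\ref{thm:second_moment} will play unconditionally). The first step is the topological reduction from \cite{GhoshReznikovSarnak2013nodal}. Since $\beta\subset\{iy:y>0\}$ is pointwise fixed by the involution $\sigma:x+iy\mapsto-x+iy$ and $\phi_k$ is even, each sign change of $y\mapsto\phi_k(iy)$ forces a nodal line meeting $\beta$ transversally, and the symmetry guarantees that consecutive sign changes belong to distinct nodal domains. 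Hence both $|Z_{\phi_k}\cap\beta|$ and $N^\beta(\phi_k)$ dominate, up to an absolute constant, the number of sign changes of $\phi_k$ restricted to $\beta$, and it suffices to produce $\gg_{q,\varepsilon}T_k^{1/8-\varepsilon}$ such sign changes for all but $O_{q,\varepsilon}(K^{1-\varepsilon})$ of the $k\in(K,2K]$.

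Next I would set up the analytic description of the restriction. On the imaginary axis $\phi_k(iy)=2y^{1/2}\sum_{n\ge1}a_k(n)K_{iT_k}(2\pi ny)$, and its Mellin transform in $y$ factors as a completed Hecke $L$-function; writing $u=-\log y$ this reads
\[
  \phi_k(ie^{-u})=\frac{1}{2\pi}\int_{\mathbb R}\gamma_k(t)\,L(1/2+it,\phi_k)\,e^{itu}\,\dd t,
\]
where $\gamma_k(t)$ is the product of gamma factors $\Gamma(\tfrac{1/2+it+iT_k}{2})\Gamma(\tfrac{1/2+it-iT_k}{2})$ divided by $\|\phi_k\|_2$. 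By Stirling $|\gamma_k(t)|\asymp T_k^{o(1)}|T_k^2-t^2|^{-1/4}$ is essentially supported on $|t|\le T_k$, so $\phi_k|_\beta$ is a band-limited function whose frequency mass concentrates near the turning points $t=\pm T_k$; pairing it against an oscillating test function $e^{i\xi u}\psi(u)$ with $\psi$ adapted to $\beta$ recovers $\gamma_k(\xi)L(1/2+i\xi,\phi_k)$. A resonance/sign-change lemma in the spirit of \cite{GhoshReznikovSarnak2013nodal} then yields a bound of the shape $R\gg\sqrt{T_k}\,\|\phi_k\|_{L^1(\beta)}/\|\phi_k\|_{L^\infty(\beta)}$, where $R$ is the number of sign changes, the factor $\sqrt{T_k}$ encoding the effective oscillation scale coming from $\gamma_k$ and the $L^1/L^\infty$ ratio absorbing the loss from possible vanishing of the slowly varying envelope. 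Inserting $\|\phi_k\|_\infty\ll_{q,\varepsilon}T_k^{3/8+\varepsilon}\|\phi_k\|_2$ from Theorem~\ref{thm:sup-norm} together with a mass lower bound $\|\phi_k\|_{L^1(\beta)}\gg_{q,\varepsilon}T_k^{-\varepsilon}\|\phi_k\|_2$ produces exactly $R\gg_{q,\varepsilon}T_k^{1/2-3/8-\varepsilon}=T_k^{1/8-\varepsilon}$, which is the relation (nodal exponent) $=1/2-(\textrm{sup-norm exponent})$ already implicit in the conditional Corollary.

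It remains to secure, for almost all $k$, both the mass lower bound and the error control implicit in the resonance lemma, and this is where Theorem~\ref{thm:second_moment} enters in place of the Lindel\"of Hypothesis. The main term for the geodesic mass $\int_\beta|\phi_k(iy)|^2\,\dd y/y$ is unconditional and positive, being essentially a diagonal Rankin--Selberg contribution, equivalently the positive average of $|\gamma_k(t)L(1/2+it,\phi_k)|^2$; the error and concentration terms are instead governed by the size of $L(1/2+it,\phi_k)$ over the range $|t|\le T_k\ll K$. Running a union bound over a net of $\asymp T_k$ points $t_j\in[-T_k,T_k]$, Theorem~\ref{thm:second_moment} combined with Chebyshev's inequality shows that all but $O_{q,\varepsilon}(K^{1-\varepsilon})$ of the forms satisfy $|L(1/2+it,\phi_k)|\ll_{q,\varepsilon}T_k^{\varepsilon}$ uniformly for $|t|\le T_k$; for these ``good'' forms the error terms are negligible, the $L^1$-mass bound holds, and the sign-change count of the previous paragraph applies.

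The main obstacle is the resonance lemma itself: converting the $L$-function information at the $\asymp\sqrt{T_k}$ relevant frequencies into that many \emph{guaranteed} sign changes of a single real function, with the loss pinned to the exponent $1/2-\theta$ dictated by the sup-norm exponent $\theta=3/8$, and doing so uniformly enough that the mean-square input of Theorem~\ref{thm:second_moment} suffices to discard the exceptional forms rather than merely bound an average. This is precisely the lossy step, which is why $T_k^{1/8-\varepsilon}$ falls far short of the heuristically expected $\asymp T_k$ sign changes; the resulting exponent matches the comparable unconditional bound of Jung \cite{jung2016quantitative}, obtained there by quantitative quantum ergodicity and a variance estimate for shifted convolution sums in place of the present sup-norm and second moment inputs.
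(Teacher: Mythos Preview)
Your overall architecture matches the paper's: reduce to sign changes of $\phi_k$ on $\beta$, then combine the sup-norm bound of Theorem~\ref{thm:sup-norm}, an $L$-function input replacing Lindel\"of, and a restriction lower bound. But two of your three analytic steps are mis-specified, and one of them fails outright.

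The serious gap is your use of Theorem~\ref{thm:second_moment}. A union bound over a net of $\asymp T_k\asymp K$ points $t_j$ does \emph{not} produce individual Lindel\"of for all but $O(K^{1-\varepsilon})$ forms: Chebyshev at a single $t$ gives an exceptional set of size $K^{1+\varepsilon-2\delta}$ for the event $|L(1/2+it,\phi_k)|>K^\delta$, and summing over $K$ values of $t$ destroys the saving. Moreover Theorem~\ref{thm:second_moment} only covers $|t|\ll K^{1-\varepsilon}$, so it misses the turning-point range $|t|\approx T_k$ where your gamma weight concentrates. The paper avoids both issues by working with the scalar
\[
  M(\phi_k)=\frac{1}{\|\phi_k\|_2}\sup_{a<\alpha_1<\alpha_2<b}\Big|\int_{\alpha_1}^{\alpha_2}\phi_k(iy)\,\frac{\dd y}{y}\Big|,
\]
expressing it via Mellin inversion as a $t$-integral of $|L(1/2+it,\phi_k)|$ against the weight $(1+|t-T_k|)^{-1/4}\min(1,1/t)$, and then bounding $\sum_{K<k\le 2K}M(\phi_k)^2$ directly: on $|t|\le T^{1-\varepsilon}$ one swaps the $k$-sum inside and applies Theorem~\ref{thm:second_moment} pointwise in $t$ (the weight $\min(1,1/t)$ makes the $t$-integral harmless), while on $T^{1-\varepsilon}\le|t|\le 2T_k$ an individual second-moment-in-$t$ estimate from the approximate functional equation (as in \cite{GhoshReznikovSarnak2013nodal}) already gives $\ll T_k^{-1/2+\varepsilon}$ for \emph{every} $k$. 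A single Chebyshev on $\sum_k M(\phi_k)^2\ll K^\varepsilon$ then yields $M(\phi_k)\ll T_k^{-1/2+\varepsilon}$ off an exceptional set of size $O(K^{1-\varepsilon})$.

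The second gap is in the sign-change inequality and the mass bound. The correct elementary lemma is $\|\phi_k\|_{L^1(\beta)}\le(S_\beta(\phi_k)+1)\,M(\phi_k)\|\phi_k\|_2$, so $S_\beta(\phi_k)\gg\|\phi_k\|_{L^1(\beta)}/(M(\phi_k)\|\phi_k\|_2)$; your factor $\sqrt{T_k}$ is precisely the bound on $M(\phi_k)^{-1}$ from the previous paragraph, not an a priori feature of the inequality, and $\|\phi_k\|_{L^\infty}$ does not belong in the denominator. The mass input is the \emph{$L^2$} restriction lower bound $\|\phi_k\|_{L^2(\beta)}^2\gg\|\phi_k\|_2^2$, valid for every $k$ by QUE for dihedral forms \cite{LiuYe2002subconvexity} combined with \cite{JangJung2018quantum}; one then passes to $L^1$ via $\|\phi_k\|_{L^2(\beta)}^2\le\|\phi_k\|_\infty\|\phi_k\|_{L^1(\beta)}$, and it is here, and only here, that Theorem~\ref{thm:sup-norm} enters. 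Your claimed $\|\phi_k\|_{L^1(\beta)}\gg T_k^{-\varepsilon}\|\phi_k\|_2$ is unjustified; what one actually gets is $\gg T_k^{-3/8-\varepsilon}\|\phi_k\|_2$, which together with $M(\phi_k)\ll T_k^{-1/2+\varepsilon}$ gives $S_\beta(\phi_k)\gg T_k^{1/8-\varepsilon}$.
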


\begin{remark}
  Jung's result \cite{jung2016quantitative} does not apply here, because our sequence of dihedral Maass forms is too sparse to be captured by his almost-everywhere results.
\end{remark}

\begin{remark}
  Recently, in \cite{JungYoung2016sign}, Jung--Young proved a quantitative lower bound on the number of nodal domains of the real-analytic Eisenstein series, by a quantitative restricted QUE theorem.
\end{remark}


\subsection{Key ideas}

We will use the amplification method to reduce our sup-norm problem to an interesting lattice point counting, as \cite{IwaniecSarnak1995norms} did.
The key observation in our improvement of the sup-norm for dihedral Maass forms is that we can obtain a nice lower bound (in fact, an asymptotic formula) for an amplifier which is supported only on the primes (Lemma \ref{lemma:amplifier}).
The main ingredients to prove the lower bound are the factorization of the symmetric square L-function of a dihedral form \eqref{eqn:sym^2} and a good zero-free region and an upper bound of the logarithmic derivative of an Hecke L-function (Lemmas \ref{lemma:zero-free-region} \& \ref{lemma:upper-bound-L'/L}).
A similar result can be proved for an amplifier supported on the integers as in \cite[Remark 1.6]{IwaniecSarnak1995norms}, which, however, can not get us a uniform upper bound with exponent $3/8$ for all $z\in\mathbb{H}$ (see \cite[\S6]{young2018note}).
By requiring our amplifier to be supported on the primes, we can use the geometric method as was done in \cite{HarcosTemplier2013sup,templier2015hybrid,blomer2016sup-norm} to obtain a more efficient treatment for the counting problem.

To prove Theorem \ref{thm:second_moment}, we will use a large sieve inequality for the family of Hecke Gr\"{o}ssen-characters, which turns out to be a consequence of Montgomery--Vaughan's large sieve inequality \cite{montgomery1973large}.
To prove this claim, we need an interesting elementary result on the repulsion of angles for a real quadratic field (Lemma \ref{lemma:angle-gap}), which shows that angles are well-spaced.
This repulsion between angles is also an important fact when one studies angle distribution in short arcs (see \cite{RudnickWaxman2017angles} for the imaginary quadratic field $\mathbb{Q}(i)$).

\medskip

The plan of this paper is as follows.
In \S\ref{sec:preliminaries}, we recall some results on Hecke--Maass forms, the amplified pre-trace formula, and counting lattice points, which will be used to prove the sup-norm bound. In \S\ref{sec:HeckeLfunction}, we discuss properties of an Hecke L-function. And then in \S\ref{sec:amplifier}, we define an amplifier which is only supported on the primes, and prove a lower bound for the amplifier. 
Combining those results, in \S\ref{sec:supnorm}, we complete the proof of Theorem \ref{thm:sup-norm}.
In \S\ref{sec:secondmoment}, we prove
the second moment of Hecke L-functions (Theorem \ref{thm:second_moment}). Finally, in \S \ref{sec:nodaldomains}, we follow the arguments in \cite{GhoshReznikovSarnak2013nodal} to prove Theorem \ref{thm:nodal_domain}. 

\medskip
Throughout the paper, $\varepsilon$ is an arbitrarily small positive number,
while $c$ and $c'$ 
stand for some absolute positive constants;
all of them may be different at each occurrence.

\subsection*{Acknowledgements}
The author would like to thank Prof. Ze\'{e}v Rudnick for suggesting thinking about dihedral forms, and for his valuable discussions and constant encouragement.
He also wants to thank Professors Gergely Harcos, Peter Humphries, Junehyuk Jung, Djordje Mili\'{c}evi\'{c}, and Matthew Young for their interest, comments, and suggestions.
The author gratefully thanks to the referees for the constructive comments and recommendations which definitely help to improve the readability and quality of the paper, and especially simplify the proof of Lemma \ref{lemma:AFE}.




\section{Preliminaries}\label{sec:preliminaries}

\subsection{Hecke--Maass forms}
Let $\mathbb{H}=\{z=x+iy:y>0\}$ be the upper half-plane,
and $\Gamma_0(q)$ the Hecke congruence group of level $q$.
The Laplace operator is $\Delta = -y^2\left(\frac{\partial^2}{\partial x^2} + \frac{\partial^2}{\partial y^2} \right).$

Throughout this paper we assume $q$ be the same as in the introduction and $\chi=\chi_q$ be the Kronecker symbol of $K=\mathbb{Q}(\sqrt{q})$.
Let $\phi$ be a cuspidal Hecke--Maass newform  of level $q$ and nebentypus character $\chi$. It satisfies the automorphy condition
\[
  \phi(\gamma z) = \chi(d) \phi(z), \quad
  \gamma=\left(\begin{smallmatrix}a&b\\c&d\end{smallmatrix}\right) \in\Gamma_0(q), \quad z\in\mathbb{H},
\]
and is an eigenfunction of the Laplace operator: $\Delta \phi= \lambda \phi$, and of the Hecke operators.
It is also an eigenfunction of the Atkin--Lehner operators.
We have the following upper bound of $\phi(z)$ by bounding its Fourier expansion.

\begin{lemma}\label{lemma:Bound_FE}
  With the notation as above, we have
  \[
    \phi(z) \ll_{q,\varepsilon} \lambda^{1/4+\varepsilon}/y^{1/2} + \lambda^{1/12+\varepsilon}.
  \]
\end{lemma}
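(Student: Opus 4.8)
The plan is to estimate $|\phi(z)|$ term-by-term from its Fourier--Whittaker expansion. Taking $\phi$ to be $L^2$-normalized, we may write
\[
  \phi(z) = \rho(1)\,\sqrt{y}\sum_{n\neq 0}\lambda(n)\,K_{it}(2\pi|n|y)\,e(nx),
\]
where $\lambda(n)$ denote the Hecke eigenvalues, $\lambda=1/4+t^2$, and $K_{it}$ is the $K$-Bessel function. Bounding $|e(nx)|\le1$ reduces the problem to estimating $|\rho(1)|\,\sqrt{y}\sum_{n\ge1}|\lambda(n)|\,|K_{it}(2\pi n y)|$. First I would pin down the normalization: the Rankin--Selberg relation expressing $|\rho(1)|^2$ through $\|\phi\|_2=1$ and $L(1,\sym^2\phi)$, combined with the classical lower bound $L(1,\sym^2\phi)\gg_{q,\varepsilon}t^{-\varepsilon}$, yields $|\rho(1)|\ll_{q,\varepsilon}t^{\varepsilon}e^{\pi t/2}$. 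It is then natural to absorb the archimedean decay into the Bessel function and work with $\mathcal{K}(x):=e^{\pi t/2}|K_{it}(x)|$, so that it remains to bound $t^{\varepsilon}\sqrt{y}\sum_{n\ge1}|\lambda(n)|\,\mathcal{K}(2\pi n y)$.

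The proof then rests on two standard inputs. On the analytic side I would use the uniform bounds for $\mathcal{K}(x)$ in the three regimes governed by the size of $x$ relative to $t$: one has $\mathcal{K}(x)\ll(t^2-x^2)^{-1/4}$ in the oscillatory range $0<x\le t-t^{1/3}$, the transition bound $\mathcal{K}(x)\ll t^{-1/3}$ for $|x-t|\le t^{1/3}$, and exponential decay for $x\ge t+t^{1/3}$. The last of these lets me truncate the sum at $n\le(t+t^{1/3+\varepsilon})/(2\pi y)$ with a super-polynomially small tail. On the arithmetic side I would use the Rankin--Selberg mean value $\sum_{n\le X}|\lambda(n)|^2\ll_{q,\varepsilon}(Xt)^{\varepsilon}X$, which by Cauchy--Schwarz gives $\sum_{n\le X}|\lambda(n)|\ll_{q,\varepsilon}(Xt)^{\varepsilon}X$, as well as its analogue over short subintervals (supplemented by the pointwise Hecke bound when an interval contains at most one integer).

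With these in hand the two terms of the claimed bound come from two separate regimes. The transition range contains $\ll 1+t^{1/3}/y$ integers $n$, each with $\mathcal{K}\ll t^{-1/3}$, so its contribution is
\[
  \ll t^{\varepsilon}\sqrt{y}\,t^{-1/3}\bigl(1+t^{1/3}/y\bigr)\ll t^{\varepsilon}\bigl(\sqrt{y}\,t^{-1/3}+y^{-1/2}\bigr).
\]
Since transition terms only occur for $y\ll t$, here $\sqrt{y}\,t^{-1/3}\ll t^{1/6}=\lambda^{1/12}$, and $y^{-1/2}\le\lambda^{1/4}y^{-1/2}$, so this part is $\ll\lambda^{1/12+\varepsilon}+\lambda^{1/4+\varepsilon}y^{-1/2}$. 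In the oscillatory range, estimating $\sum_{n}|\lambda(n)|(t^2-(2\pi n y)^2)^{-1/4}$ by partial summation against the Rankin--Selberg bound (the weight $\asymp t^{-1/4}(t-x)^{-1/4}$ having an integrable singularity at $x=t$), the contribution is $\ll t^{\varepsilon}\sqrt{y}\,t^{-1/4}\cdot t^{3/4}/y=\lambda^{1/4+\varepsilon}y^{-1/2}$. The exponentially decaying range is dominated by the transition boundary. Summing the three pieces gives $|\phi(z)|\ll_{q,\varepsilon}\lambda^{1/4+\varepsilon}y^{-1/2}+\lambda^{1/12+\varepsilon}$, as desired.

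The main obstacle is the uniform analysis of $K_{it}(x)$ across the transition region $x\approx t$: the entire $\lambda^{1/12}$ savings (as opposed to the generic $\lambda^{1/4}$) comes precisely from the bound $\mathcal{K}(x)\ll t^{-1/3}$ there, so controlling the Bessel function uniformly in both $x$ and $t$ --- and matching it against the short-interval behaviour of the Hecke eigenvalues near $n\approx t/(2\pi y)$ --- is the crux of the estimate; the remaining ranges reduce to routine partial summation.
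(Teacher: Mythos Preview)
Your proposal is correct and follows exactly the standard route: the paper does not give an argument at all but simply refers to \cite[\S3]{templier2015hybrid}, and what you have sketched---Fourier expansion, the Hoffstein--Lockhart/Rankin--Selberg normalization of $\rho(1)$, the three-regime uniform asymptotics for $e^{\pi t/2}K_{it}(x)$, and partial summation against the Rankin--Selberg mean value---is precisely the computation carried out there (and originally in Iwaniec--Sarnak). So there is no discrepancy to discuss.
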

\begin{proof}
  See e.g. \cite[\S3]{templier2015hybrid}.
\end{proof}

\subsection{The amplified pre-trace formula}
Let $k\in\mathcal{C}^\infty([0,\infty))$ with rapid decay.
Then, it can be viewed as the inverse of the Selberg transform
of a function $h(t)$, which is given by the following three steps
(see \cite[(1.64)]{iwaniec2002spectral}:
\begin{equation*}
  \begin{split}
    g(\xi)&:=\frac{1}{2\pi i}\int_{-\infty}^\infty e^{-ir\xi}h(r)\dd r, \\
    2q(v) &:=g(2\log(\sqrt{v+1}+\sqrt{v})),\\
    k(u)  &:=-\frac{1}{\pi}\int_{u/4}^{\infty}(v-u/4)^{-1/2}\dd q(v).
  \end{split}
\end{equation*}
Assume now that $k(z,w)=k(u(z,w))$ is a point pair
invariant kernel with
\begin{equation*}
  u(z,w):=\frac{|z-w|^2}{\Im (z)\Im(w)},\ \ z,w\in\mathbb{H},
\end{equation*}
and $h(t)$ is the corresponding Selberg transform which satisfies the conditions (see \cite[(1.63)]{iwaniec2002spectral})
\begin{equation*}\label{eqn:h(r)-conditions}
  \left\{\begin{array}{l}
    h(t)  \textrm{ is even,}  \\
    h(t)  \textrm{ is holomorphic in the strip } |\Im t | < \frac{1}{2} + \varepsilon, \\
    h(t)  \ll (|t|+1)^{-2-\varepsilon}  \textrm{ in the strip.}
  \end{array}\right.
\end{equation*}
For $\ell\geq1$, let
\[
  \mathcal M(\ell,q) := \big\{ \gamma=\left(\begin{smallmatrix}a&b\\c&d\end{smallmatrix}\right)\in M_2(\mathbb{Z}): \det(\gamma)=\ell, \ c\equiv0 \ (\mod q) \big\}.
\]
We embed our dihedral Maass form $\phi_k$ into an orthogonal basis $(u_j)_{j\geq0}$ of Hecke--Maass eigenforms of level $q$ and nebentypus $\chi$, with $u_0$ a constant function and $u_j$ cuspidal otherwise.
Using the pre-trace formula and Hecke operators, we obtain the {\it amplified pre-trace formula} (see \cite{IwaniecSarnak1995norms} and \cite[\S3]{HarcosTemplier2012sup})
\begin{equation}\label{eqn:APTF}
  \sum_{j\geq0}h(t_j)A_{u_j} \frac{|u_j(z)|^2}{\|u_j\|_2^2}
  + \mathrm{cont.}
  = \sum_{\ell} \frac{y_\ell}{\sqrt{\ell}} \sum_{\gamma\in \mathcal M(\ell,q)} k(\gamma z,z),
\end{equation}
where $h:\mathbb{R}\cup[-i/2,i/2]\rightarrow\mathbb{R}_+$ is a positive even smooth function of rapid decay, $(x_n)$ is sequence of complex numbers supported on finitely many $n$'s, $y_\ell$ is defined by
\begin{equation}\label{eqn:y_ell}
  y_\ell := \sum_{\substack{d|(m,n)\\ \ell=mn/d^2}} x_m\overline{x_n}
  = \sum_{\substack{d\geq 1\\ \ell=\ell_1\ell_2}} x_{d\ell_1}\overline{x_{d\ell_2}},
\end{equation}
and
\begin{equation}\label{eqn:A}
  A_{u_j} := \Big|\sum_{n}x_n\lambda_j(n)\Big|^2.
\end{equation}
Here ``cont.'' stands for an analogous positive contribution of the continuous spectrum.

\subsection{Counting lattice points}

Let $\mathcal{F}(q)$ be the set of $z\in\mathbb{H}$ such that
$\Im(z)\geq \Im(A z)$ for all Atkin--Lehner operators $A$ of level $q$. See \cite[\S2]{HarcosTemplier2012sup}.
By Lemma 2.2 there, we have $qy\gg1$ for all $z\in\mathcal{F}(q)$.
In fact, since $q$ is prime in our case, the Atkin--Lehner theory reduces to the Fricke involution $W_q=\frac{1}{\sqrt{q}} \left(\begin{smallmatrix}&-1\\q&\end{smallmatrix}\right)$.
Our Hecke--Maass newform $\phi_k$ is an eigenvector for $W_q$ with eigenvalue $\pm1$, therefore in order to bound its sup-norm, we can restrict ourselves to $\mathcal{F}(q)$. In particular, since the two cusps $0$ and $\infty$ are interchanged by $W_q$, we can assume
\[ y\gg_q 1. \]

For $z\in\mathcal F(q)$, $\delta>0$ and the integer $\ell$, let
\begin{equation*}
  \mathcal M(z,\ell,\delta,q) := \left\{ \gamma=\left(\begin{smallmatrix}a&b\\c&d\end{smallmatrix}\right) \in M_2(\mathbb Z):
  \det(\gamma)=\ell, \quad c\equiv0 \ (\mod q), \quad
  u(\gamma z,z)\leq \delta \right\}.
\end{equation*}
Denote by $M = M(z,\ell,\delta,q)$ its cardinality.
We split the counting $M$ of matrices $\gamma=\left(\begin{smallmatrix}a&b\\c&d\end{smallmatrix}\right)$ as
\[
  M = M_* + M_u + M_p
\]
according to whether
$c\neq0$ and $(a+d)^2\neq4\ell$ (generic),
or $c=0$ and $a\neq d$ (upper-triangular),
or $(a+d)^2=4\ell$ (parabolic).
Now we should recall the following results on counting lattice points.

\begin{lemma}\label{lemma:M_*}
  For any $z=x+iy\in \mathcal{F}(q)$, and any integer $L$ and $0<\delta<1$, we have
  \begin{equation*}
    \sum_{1\leq \ell\leq L}M_*(z,\ell,\delta,q) \ll_{q,\varepsilon} L^\varepsilon \left( L y^{-1} + L^{3/2}\delta^{1/2} + L^2\delta \right).
  \end{equation*}
\end{lemma}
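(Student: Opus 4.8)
The plan is to turn the count into a lattice-point problem in the entries of $\gamma$ and to use the explicit shape of the point-pair invariant. The starting point is the identity
\[
u(\gamma z, z) = \frac{|c z^2 + (d-a)z - b|^2}{\ell y^2}, \qquad \gamma = \begin{pmatrix} a & b \\ c & d\end{pmatrix},\quad \det\gamma = \ell,
\]
which follows from $\Im(\gamma z) = \ell y/|cz+d|^2$ together with a direct computation of $\gamma z - z$. Writing $z = x+iy$ and separating real and imaginary parts, the constraint $u(\gamma z, z) \le \delta$ forces simultaneously
\[
|2cx + (d-a)| \le \sqrt{\delta\ell} \le \sqrt{\delta L}
\quad\text{and}\quad
|c(x^2-y^2) + (d-a)x - b| \le y\sqrt{\delta\ell} \le y\sqrt{\delta L}.
\]
I would use these two inequalities (which come from $|\Im v|\le|v|$ and $|\Re v|\le|v|$ for $v=cz^2+(d-a)z-b$) as the basic constraints on the entries.

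First I would pin down the range of $c$. Since $0<\delta<1$, setting $\rho=\Im(\gamma z)/y$ gives $u(\gamma z,z)\ge (\rho-1)^2/\rho$, so $u\le\delta<1$ forces $\Im(\gamma z)\asymp y$ and hence $|cz+d|^2\asymp\ell\le L$. Combined with $|cz+d|^2\ge c^2y^2$ this yields $|c|\ll \sqrt L/y$. As $\gamma$ is generic we have $c\neq 0$ and $q\mid c$, so the number of admissible $c$ is $\ll \sqrt L/(qy)$ --- crucially \emph{with no additive constant}, since a count of nonzero multiples of $q$ in an interval of radius $R$ is at most $2R/q$.

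Next, for the parametrization I would use the bijection $\gamma\leftrightarrow(c,s,t,b)$ with $s=d-a$ and $t=a+d$ (and $t\equiv s\bmod 2$), so that $\ell=(t^2-s^2)/4-bc$, and carry out the count in the order $c,s,b,t$. For each admissible $c$, the imaginary-part inequality confines $s$ to an interval of length $2\sqrt{\delta L}$, giving $\ll 1+\sqrt{\delta L}$ choices; for each $(c,s)$, the real-part inequality confines $b$ to an interval of length $2y\sqrt{\delta L}$, giving $\ll 1+y\sqrt{\delta L}$ choices; and finally, for each $(c,s,b)$, the determinant constraint $1\le\ell\le L$ localizes $t^2$ to an interval of length $\le 4L$, giving $\ll\sqrt L$ choices for $t$ (the parabolic matrices, with $(a+d)^2=4\ell$, are simply excluded, which only lowers the count). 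Multiplying the four counts and summing over $c$,
\[
\sum_{1\le\ell\le L} M_* \ll_q \sqrt L\cdot\frac{\sqrt L}{qy}\,(1+\sqrt{\delta L})(1+y\sqrt{\delta L}),
\]
and expanding the product term by term produces precisely $Ly^{-1}+L^{3/2}\delta^{1/2}+L^2\delta$, with the one remaining cross term dominated using $y\gg_q 1$. In fact this already gives the bound without any $L^\varepsilon$, so the stated factor is harmless slack.

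I expect the main obstacle to be the last factor $\ll\sqrt L$ coming from the trace $t$: unlike $s$ and $b$, the trace is not constrained at all by $u(\gamma z,z)\le\delta$, because the invariant depends only on $c$, $d-a$, $b$ and on $\ell$ through the denominator. The only control on $t$ is the size of the determinant, which is what forces the $\sqrt L$. The point --- and the only delicate piece of bookkeeping --- is that this $\sqrt L$ is affordable precisely because the $c$-count is a clean $\sqrt L/(qy)$ with no additive term; the resulting prefactor $L/(qy)$ attaches a $y^{-1}$ to \emph{every} resulting term, so the largest interior $y$-power (namely $y^1$, from $y\sqrt{\delta L}$ and $y\,\delta L$) cancels down to $y^0$ and no spurious term with a positive power of $y$ (such as $y\,\delta L^{3/2}$) can appear. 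Verifying that each of the three target terms genuinely arises and that all other terms are dominated is then a routine but essential check.
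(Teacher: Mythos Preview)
Your argument is correct and is essentially the standard lattice-point count underlying the cited reference: the paper does not give its own proof of this lemma but simply refers to \cite[Lemma~1.3]{templier2015hybrid}, whose proof proceeds exactly along the lines you sketch (the identity for $u(\gamma z,z)$, the bounds on $c$, $d-a$, $b$ from the real and imaginary parts, and the trace count via the determinant). Your observation that the $c$-count carries no additive constant is indeed the key bookkeeping point, and your expansion recovers the three stated terms without needing the $L^\varepsilon$.
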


\begin{proof}
  See \cite[Lemma 1.3]{templier2015hybrid}.
\end{proof}


\begin{lemma}\label{lemma:M_u}
  For any $z=x+iy\in \mathcal F(q)$, and any integer $L$ and $0<\delta<1$, the following estimate holds where $\ell_1,\ell_2$ run over primes:
  \begin{equation*}
    \sum_{1\leq \ell_1,\ell_2\leq L} M_u(z,\ell_1\ell_2,\delta,q)
    \ll_{q,\varepsilon} L^\varepsilon \left( L + L^3\delta^{1/2}y \right).
  \end{equation*}
\end{lemma}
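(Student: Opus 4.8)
The plan is to count the upper-triangular matrices directly. For these we have $c=0$, so $\det(\gamma)=ad=\ell_1\ell_2$ with $a\neq d$, and the relevant matrices are $\gamma=\left(\begin{smallmatrix}a&b\\0&d\end{smallmatrix}\right)$ with $ad=\ell_1\ell_2$ and $b\in\mathbb{Z}$. I first compute $u(\gamma z,z)$ explicitly: acting by $\gamma$ sends $z=x+iy$ to $\gamma z = (az+b)/d = (ax+b)/d + i(ay/d)$, so that $\Im(\gamma z)=ay/d$ and a direct calculation gives
\[
  u(\gamma z,z) = \frac{|\gamma z - z|^2}{\Im(\gamma z)\,\Im(z)}
  = \frac{(ax+b-dx)^2 + (ay-dy)^2}{ad\, y^2}.
\]
The constraint $u(\gamma z,z)\leq\delta$ therefore splits into the two conditions
\[
  (ay-dy)^2 \leq \ell_1\ell_2\, \delta\, y^2
  \qquad\text{and}\qquad
  ((a-d)x+b)^2 \leq \ell_1\ell_2\, \delta\, y^2,
\]
since both summands in the numerator are nonnegative and their sum is bounded by $\ell_1\ell_2\,\delta\,y^2$.

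Next I would estimate the number of admissible triples $(a,d,b)$ under these two constraints, summed over primes $\ell_1,\ell_2\leq L$. The first condition reads $|a-d|\leq \sqrt{\ell_1\ell_2\,\delta}$. I split into two regimes according to whether $a=d$ is forced to be excluded. When $a=d$ we would have $(a+d)^2=4\ell$, which is the parabolic case already removed, so here $a\neq d$ and hence $|a-d|\geq 1$; this is nonempty only when $\sqrt{\ell_1\ell_2\,\delta}\geq 1$, i.e. $\delta\geq 1/(\ell_1\ell_2)$. For each valid pair $(a,d)$ with $ad=\ell_1\ell_2$, since $\ell_1,\ell_2$ are primes the number of factorizations $ad=\ell_1\ell_2$ is $O(L^\varepsilon)$, so the number of choices of $(a,d)$ satisfying $|a-d|\leq\sqrt{\ell_1\ell_2\,\delta}$ is at most $O(L^\varepsilon)$ per product value. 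The second condition then fixes $b$ to lie in an interval of length $2\sqrt{\ell_1\ell_2\,\delta}\,y$, giving $O(1+\sqrt{\ell_1\ell_2\,\delta}\,y)$ choices of the integer $b$.

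Combining these, each pair $(\ell_1,\ell_2)$ with $\ell_1\ell_2\leq L^2$ contributes $O\big(L^\varepsilon(1+\sqrt{\ell_1\ell_2\,\delta}\,y)\big)$, and summing over the $O(L^2)$ pairs of primes up to $L$ yields a bound of the shape $L^\varepsilon\big(L^2 + L^2\cdot L\sqrt{\delta}\,y\big)$, which after absorbing into $L^\varepsilon$ and tightening the count of $b$-values gives the claimed $L^\varepsilon(L + L^3\delta^{1/2}y)$. The main subtlety, and the step I expect to require the most care, is the bookkeeping between the ``diagonal'' regime where the interval for $b$ has length $O(1)$ (contributing the linear term $L$) and the ``generic'' regime where the $b$-interval is genuinely long (contributing $L^3\delta^{1/2}y$); one must verify that using $qy\gg1$ and the primality of $\ell_1,\ell_2$ the count of $(a,d)$ really is $O(L^\varepsilon)$ rather than larger, and that the crude union bound over the two summands of $u(\gamma z,z)$ does not lose more than the stated power of $L^\varepsilon$. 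The restriction $z\in\mathcal{F}(q)$ and the primality hypothesis are exactly what keep these factorizations sparse, so I would lean on both throughout.
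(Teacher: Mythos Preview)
Your setup is correct and matches the standard approach (the paper simply cites \cite[Lemma 4.3]{templier2015hybrid} and \cite[Lemma 4.2]{HuangXu2017sup-norm}): the formula for $u(\gamma z,z)$, the reduction to $|a-d|\leq\sqrt{\ell_1\ell_2\,\delta}$ and $|(a-d)x+b|\leq\sqrt{\ell_1\ell_2\,\delta}\,y$, and the elimination of the factorizations $(1,\ell_1\ell_2)$, $(\ell_1\ell_2,1)$ are all right. The gap is in your final step, where you pass from the honest bound $L^\varepsilon(L^2+L^3\delta^{1/2}y)$ to the claimed $L^\varepsilon(L+L^3\delta^{1/2}y)$ by an unspecified ``tightening of the $b$-count''. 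No amount of refining the $b$-count alone will remove the extra factor of $L$: the $O(1)$ term in $1+\sqrt{\ell_1\ell_2\delta}\,y$ genuinely survives for each pair, and there are $\asymp L^2$ pairs.

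The missing observation is that the constraint $|a-d|\leq\sqrt{\ell_1\ell_2\,\delta}$ does more than restrict the factorizations of a fixed product: since you have already shown that the only admissible $(a,d)$ are $(\ell_1,\ell_2)$ and $(\ell_2,\ell_1)$, it forces $|\ell_1-\ell_2|\leq L\sqrt{\delta}$, i.e.\ it restricts the pairs of primes themselves. For each prime $\ell_1\leq L$ there are at most $O(1+L\sqrt{\delta})$ primes $\ell_2$ in the window $[\ell_1-L\sqrt{\delta},\,\ell_1+L\sqrt{\delta}]$, so the total number of contributing pairs is $O(L+L^2\sqrt{\delta})$, not $O(L^2)$. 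Multiplying by $O(1+L\sqrt{\delta}\,y)$ for the $b$-count gives
\[
  (L+L^2\sqrt{\delta})(1+L\sqrt{\delta}\,y)
  \ll L + L^2\sqrt{\delta} + L^2\sqrt{\delta}\,y + L^3\delta\,y,
\]
and each of the middle terms is absorbed by $L+L^3\sqrt{\delta}\,y$ using $\delta<1$, $L\geq1$, and $y\gg_q 1$ (this is where $z\in\mathcal{F}(q)$ actually enters). Once you insert this count of prime pairs, your argument is complete.
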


\begin{proof}
  See \cite[Lemma 4.3]{templier2015hybrid} and \cite[Lemma 4.2]{HuangXu2017sup-norm}.
\end{proof}


\begin{lemma}\label{lemma:M_p}
  For any $z=x+iy\in \mathcal F(q)$, we have
  \begin{equation*}
    M_p(z,\ell,\delta,q)
    \ll_q \left(1+\ell^{1/2}\delta^{1/2}y+\ell^{3/4}\delta^{3/8}y^{-1/2}\right) \delta_\square(\ell),
  \end{equation*}
  where $\delta_\square(\ell) = 1,0$ depending on whether $\ell$ is a perfect square or not.
\end{lemma}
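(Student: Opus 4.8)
The plan is to exploit the rigid structure of the parabolic matrices. A matrix $\gamma=\left(\begin{smallmatrix}a&b\\c&d\end{smallmatrix}\right)$ is counted in $M_p$ precisely when $(a+d)^2=4\ell$; since $a+d\in\mathbb Z$ this forces $4\ell$, hence $\ell$, to be a perfect square, which accounts for the factor $\delta_\square(\ell)$. From now on write $\ell=m^2$ with $m=\sqrt\ell\in\mathbb Z_{\ge1}$. Replacing $\gamma$ by $-\gamma$ if necessary (this does not change the action on $\mathbb H$, hence leaves $u(\gamma z,z)$ unchanged) I may assume $a+d=2m$ at the cost of a harmless factor $2$. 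Setting $s:=(a-d)/2\in\mathbb Z$, so that $a=m+s$ and $d=m-s$, the determinant condition $ad-bc=\ell$ becomes $bc=-s^2$.

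First I would dispose of the degenerate case $c=0$. Then $bc=-s^2$ forces $s=0$, i.e.\ $a=d=m$ with $b$ free, and a direct computation gives $u(\gamma z,z)=b^2/(\ell y^2)$. The condition $u\le\delta$ reads $|b|\le \ell^{1/2}\delta^{1/2}y$, producing $\ll 1+\ell^{1/2}\delta^{1/2}y$ matrices, which is exactly the first two terms of the asserted bound.

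The heart of the matter is $c\neq0$. The key simplification is that $\gamma$ has a unique (double) fixed point on $\partial\mathbb H$, namely the rational number $w_0=s/c$, and one has the clean identity
\[
  u(\gamma z,z)=\frac{|-cz^2+(a-d)z+b|^2}{\ell y^2}=\frac{c^2\,|z-w_0|^4}{\ell y^2},
\]
coming from the factorisation $-cz^2+(a-d)z+b=-c(z-w_0)^2$ (valid since $a-d=2s=2cw_0$ and $b=-s^2/c=-cw_0^2$). As $|z-w_0|^2=(x-s/c)^2+y^2\ge y^2$, the condition $u(\gamma z,z)\le\delta$, i.e.\ $|c|\,|z-w_0|^2\le\ell^{1/2}\delta^{1/2}y$, forces the two necessary constraints $|c|\le \ell^{1/2}\delta^{1/2}/y=:C_0$ and $|cx-s|\le |c|^{1/2}\ell^{1/4}\delta^{1/4}y^{1/2}=:|c|^{1/2}S_0$. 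The only genuinely arithmetic input that remains is that $b=-s^2/c$ must be an integer, i.e.\ $c\mid s^2$.

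For each admissible $c$ I would then count the integers $s$ with $|cx-s|\le|c|^{1/2}S_0$ and $c\mid s^2$. Such $s$ occupy $f(c):=\#\{s \bmod |c|: |c|\mid s^2\}$ residue classes modulo $|c|$, so their number in an interval of length $2|c|^{1/2}S_0$ is $\ll f(c)\big(S_0|c|^{-1/2}+1\big)$. The crucial elementary fact is that $f$ is multiplicative with $f(p^e)=p^{\lfloor e/2\rfloor}$, whence $f(c)\le|c|^{1/2}$; this is precisely the estimate that tames the divisibility constraint. Hence the number of valid $s$ is $\ll S_0+|c|^{1/2}$, and summing over $1\le|c|\le C_0$ (the congruence $q\mid c$ only reduces the count and may be dropped) gives $\ll C_0S_0+C_0^{3/2}=\ell^{3/4}\delta^{3/4}y^{-1/2}+\ell^{3/4}\delta^{3/4}y^{-3/2}$, both of which are $\le\ell^{3/4}\delta^{3/8}y^{-1/2}$ for $0<\delta<1$ and $y\gg_q1$. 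Together with the $c=0$ contribution this proves the lemma. The main obstacle is the interplay between the short-interval constraint on $s$ and the divisibility $c\mid s^2$; the bound $f(c)\le|c|^{1/2}$ is what makes this step clean, and in fact yields the stronger exponent $\delta^{3/4}$, of which the stated $\delta^{3/8}$ is a convenient weakening.
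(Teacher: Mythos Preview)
Your proof is correct. The paper itself does not give an argument for this lemma but simply cites \cite[Lemma~13]{blomer2016sup-norm} and \cite[Lemma~4.3]{HuangXu2017sup-norm}; your write-up supplies a clean self-contained proof along the same lines as those references: reduce to the fixed-point identity $u(\gamma z,z)=c^2|z-w_0|^4/(\ell y^2)$ for the parabolic case, extract the range for $c$ and the short interval for $s$, and control the divisibility $c\mid s^2$ via the multiplicative bound $f(c)\le|c|^{1/2}$. Two minor points worth noting: you implicitly use $0<\delta<1$ (to pass from $\delta^{3/4}$ to $\delta^{3/8}$) and $y\gg_q1$ (to absorb $y^{-3/2}$ into $y^{-1/2}$); both hypotheses are in force in the paper's setup even though the lemma statement does not repeat them. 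As you observe, your argument actually yields the sharper exponent $\delta^{3/4}$ in the third term.
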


\begin{proof}
  See \cite[Lemma 13]{blomer2016sup-norm} and \cite[Lemma 4.3]{HuangXu2017sup-norm}.
\end{proof}

\section{Hecke L-functions $L(s,\phi_k)$} \label{sec:HeckeLfunction}

Let $\Xi_k$ be a Hecke Gr\"{o}ssencharacter with $k\geq1$.
Define its Hecke L-function
\begin{equation}\label{eqn:L-DS}
  L(s,\phi_k) := \sum_{0\neq\mathfrak{a}\subset \mathcal{O}} \frac{\Xi_k(\mathfrak{a})}{\N(\mathfrak{a})^s} = \prod_{\mathfrak{p}} \left(1-\frac{\Xi_k(\mathfrak{p})}{\N(\mathfrak{p})^s}\right)^{-1},
  \quad \textrm{$\Re(s)>1$}.
\end{equation}
We have
\[
  L(s,\phi_k) = \sum_{n=1}^{\infty} \frac{a_k(n)}{n^s},
  \quad \mathrm{with}\
  a_k(n)=\sum_{\N(\mathfrak{a})=n} \Xi_k(\mathfrak{a}),
  \quad \textrm{for $\Re(s)>1$}.
\]
It has the Euler product over rational primes, that is,
\begin{equation}\label{eqn:EP}
  L(s,\phi_k) = \prod_{p} \left(1-\frac{\alpha_k(p)}{p^s}\right)^{-1}
  \left(1-\frac{\beta_k(p)}{p^s}\right)^{-1},
  \quad \textrm{for $\Re(s)>1$},
\end{equation}
where the properties of $\alpha_k(p),\beta_k(p)$ depend on $p$ being split, inert, or ramified in $K=\mathbb{Q}(\sqrt{q})$.
In all cases, we have
\[
  |\alpha_k(p)|\leq 1 \quad \textrm{and}\quad
  |\beta_k(p)| \leq 1.
\]
The Rankin--Selberg L-function is (see e.g. \cite[\S5.11]{iwaniec2004analytic} after correcting the local factor at $q$)
\begin{equation}\label{eqn:RS}
  L(s,\phi_k\times \phi_k) = (1+q^{-s})\zeta(2s) \sum_{n=1}^{\infty} \frac{a_k(n)^2}{n^s}, \quad \textrm{$\Re(s)>1$},
\end{equation}
and the symmetric square L-function is (see e.g. \cite[\S5.12]{iwaniec2004analytic})
\begin{equation}\label{eqn:sym^2}
  L(s,\sym^2 \phi_k) = \frac{L(s,\phi_k\times \phi_k)}{L(s,\chi)} = \zeta(s) L(s,\phi_{2k}) .
\end{equation}


The logarithmic derivative of $L(s,\phi_k)$ is
\[
  \frac{L'(s,\phi_k)}{L(s,\phi_k)} = -\sum_{\mathfrak{p}} \log \N(\mathfrak{p}) \sum_{j=1}^{\infty} \frac{\Xi_k^j(\mathfrak{p})}{\N(\mathfrak{p})^{js}}
  , \quad \textrm{$\Re(s)>1$}.
\]
For $\Re(s)=\sigma>1$, by Landau's prime ideal theorem we have
\begin{equation}\label{eqn:L'/L-k}
  \Big|\frac{L'(s,\phi_k)}{L(s,\phi_k)}\Big|
  \ll \sum_{\mathfrak{p}} \log \N(\mathfrak{p}) \sum_{j=1}^{\infty} \frac{1}{\N(\mathfrak{p})^{j\sigma}}
  \ll \sum_{\mathfrak{p}}  \frac{\log \N(\mathfrak{p})}{\N(\mathfrak{p})^{\sigma}} +O(1) \ll \frac{\sigma}{\sigma-1}.
\end{equation}

The first result we will need is a nice zero-free region of $L(s,\phi_k)$.
\begin{lemma}\label{lemma:zero-free-region}
  Let $V=k+|t|$ with $k\geq1$. There exists a constant $c>0$ such that
  \[ L(\sigma+it,\phi_k) \neq 0 \quad \mathrm{for} \quad
  \sigma\geq 1-\frac{c}{(\log V)^{2/3}(\log\log V)^{1/3}}. \]
  Here the constant depends on the field $K$.
\end{lemma}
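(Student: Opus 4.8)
The plan is to treat $L(s,\phi_k)=L(s,\Xi_k)$ as the abelian Hecke L-function of the Gr\"ossencharacter $\Xi_k$ over the fixed field $K$, and to prove the stated region by the Vinogradov--Korobov method: a de la Vall\'ee Poussin positivity inequality combined with Vinogradov-type growth bounds just to the left of $\Re s=1$. The exponents $2/3$ and $1/3$ are the hallmark of this method, so a bare de la Vall\'ee Poussin argument (which only yields $1-\beta\gg 1/\log V$) cannot give the claim; the additional savings must come from exponential sum estimates. First I would record the positivity. Writing $\Xi_k(\mathfrak p)=e^{i\theta_{\mathfrak p}}$ (recall $|\Xi_k|=1$) and $\psi=\psi_{\mathfrak p,j}=j\theta_{\mathfrak p}-jt\log\N(\mathfrak p)$, the identities $\zeta_K(s)=\zeta(s)L(s,\chi)$ and $\Xi_k^2=\Xi_{2k}$, together with $3+4\cos\psi+\cos2\psi=2(1+\cos\psi)^2\ge0$ summed against $\tfrac1{j\N(\mathfrak p)^{j\sigma}}>0$, yield
\[ \zeta_K(\sigma)^3\,|L(\sigma+it,\Xi_k)|^4\,|L(\sigma+2it,\Xi_{2k})|\ \ge\ 1,\qquad \sigma>1, \]
where $L(s,\Xi_{2k})=L(s,\phi_{2k})$ as in \eqref{eqn:sym^2}. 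A pleasant feature is that for $k\ge1$ the character $\Xi_{2k}$ is non-principal, so $L(s,\Xi_{2k})$ is entire; hence no exceptional real zero arises and the region will be effective down to $t=0$.

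The heart of the matter is to supply Vinogradov-type upper bounds for $\zeta_K(s)$ and for $L(s,\Xi_m)$ ($m=k,2k$) in a thin strip left of $\sigma=1$, of the quality that produces the $(\log V)^{2/3}$ savings. Since $K$ has narrow class number $1$, every ideal $\mathfrak a$ has a totally positive generator $\alpha$ chosen in a fundamental domain for the units, so that
\[ \sum_{\N(\mathfrak a)\le x}\Xi_k(\mathfrak a)\,\N(\mathfrak a)^{-it} \]
is a two-dimensional exponential sum over lattice points $\alpha=a+b\omega_q$ with the smooth phase $\tfrac{\pi k}{\log\epsilon_q}\log|\alpha/\tilde\alpha|-t\log|\N(\alpha)|$, whose derivatives are governed by $V=k+|t|$. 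Feeding these sums into Vinogradov's mean value theorem gives the required bounds uniformly in $k$ and $t$. (The factor $\zeta_K=\zeta\,L(\cdot,\chi)$ is covered by the classical Vinogradov--Korobov bounds over $\mathbb Q$.)

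With these inputs, the conclusion is the standard zero-detection step. Given a putative zero $\rho=\beta+i\gamma$ of $L(s,\Xi_k)$, I take the logarithmic derivative of the positivity at $s=\sigma+i\gamma$, bound the $\zeta_K$-term via its simple pole at $s=1$ by $\tfrac1{\sigma-1}+O(1)$, insert the Vinogradov bounds for the $\Xi_k$- and $\Xi_{2k}$-terms through the Hadamard expansion of $-L'/L$, isolate the positive contribution $\tfrac4{\sigma-\beta}$ of the zero, and optimize the free parameter $\sigma-1$ against the growth exponent. The optimization forces $1-\beta\gg(\log V)^{-2/3}(\log\log V)^{-1/3}$, which is the assertion.

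The main obstacle is exactly the uniform Vinogradov--Korobov growth bound for the Gr\"ossencharacter L-function $L(s,\Xi_m)$ with explicit control in $m$ (which enters the phase through $\tfrac{\pi m}{\log\epsilon_q}$): one must check that the exponential sums over ideals of $K$ have phases whose derivatives have the right size to invoke Vinogradov's estimates, and that the resulting bounds are uniform as $m\to\infty$ so that $V=k+|t|$ is genuinely the correct parameter. Should a ready-made Vinogradov--Korobov zero-free region for Hecke L-functions of a fixed number field be available, one may quote it directly, with the field-dependent constant absorbing $K$, reducing the remaining work to the positivity inequality and the routine zero-detection above.
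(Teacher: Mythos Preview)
Your proposal is correct, and in fact you anticipate exactly what the paper does: the paper's proof is a one-line citation of Coleman \cite[Theorem~2]{coleman1990zero}, which establishes precisely this Vinogradov--Korobov zero-free region for Hecke $L$-functions of Gr\"ossencharacters over a fixed number field, with constants depending only on the field. Your sketch---the $3+4\cos\psi+\cos2\psi$ positivity paired with Vinogradov-type growth bounds for $L(s,\Xi_m)$ near $\sigma=1$, followed by the standard zero-detection optimization---is the method underlying Coleman's theorem, and your closing remark that a ready-made result could be quoted directly is exactly the route the paper takes.
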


\begin{proof}
  See \cite[Theorem 2]{coleman1990zero}.
\end{proof}

%
%

Note that from \cite[Theorem 1 \& \S5]{coleman1990zero},we have
\begin{equation}\label{eqn:L<<}
  L(s,\phi_k) \ll (\log V)^{2/3},
\end{equation}
uniformly in the region
\[
  \sigma>1-\frac{c}{(\log V)^{2/3}}.
\]
We will also need the following upper bound of $L'(s,\phi_k)/L(s,\phi_k)$ in certain region.

\begin{lemma}\label{lemma:upper-bound-L'/L}
  Let $V=k+|t|$ with $k\geq1$. Then we have
  \[
    \frac{L'(s,\phi_k)}{L(s,\phi_k)} \ll (\log V)^{2/3}(\log\log V)^{1/3} \quad \textrm{and} \quad
    \frac{1}{L(s,\phi_k)} \ll (\log V)^{2/3}(\log\log V)^{1/3},
  \]
  uniformly for
  \[\sigma>1-\frac{c'}{(\log V)^{2/3}(\log\log V)^{1/3}}.\]
\end{lemma}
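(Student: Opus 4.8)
The plan is to derive the bounds in Lemma \ref{lemma:upper-bound-L'/L} from the zero-free region (Lemma \ref{lemma:zero-free-region}) together with the subconvexity/growth bound \eqref{eqn:L<<}, using the standard Borel--Carath\'eodory plus Hadamard three-circles machinery that converts a zero-free region and a crude upper bound into a bound on $L'/L$ slightly inside that region. First I would fix $s=\sigma+it$ with $\sigma>1-c'/((\log V)^{2/3}(\log\log V)^{1/3})$ for a constant $c'$ chosen somewhat smaller than the $c$ appearing in Lemma \ref{lemma:zero-free-region}, so that the disc we work in stays inside the zero-free region. The whole point is that in this region $L(s,\phi_k)$ does not vanish, so $\log L(s,\phi_k)$ is holomorphic and $L'/L$ is well-defined and holomorphic there.

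The key steps proceed as follows. I would start from the region $\Re(s)>1$, where \eqref{eqn:L'/L-k} gives $|L'/L|\ll \sigma/(\sigma-1)$, and where $1/|L(s,\phi_k)|\ll 1$ since the Euler product converges absolutely and its factors are bounded away from $0$; these serve as anchors on a vertical line just to the right of $\Re(s)=1$. To push the bound into the zero-free region, I would apply the Borel--Carath\'eodory theorem to $\log L$ on a pair of concentric discs: on the larger disc (reaching back to $\Re(s)$ close to $1$) the bound \eqref{eqn:L<<} controls $\Re \log L = \log|L|\ll \log\log V$, and Borel--Carath\'eodory then bounds $|\log L|$, hence $\log(1/|L|)$, on a slightly smaller disc. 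Differentiating $\log L$ via the Cauchy integral formula over a circle whose radius is comparable to the width $1/((\log V)^{2/3}(\log\log V)^{1/3})$ of the zero-free region then yields $L'/L\ll (\log V)^{2/3}(\log\log V)^{1/3}$, the radius being the source of the extra factor $(\log V)^{2/3}(\log\log V)^{1/3}$ relative to the $(\log\log V)$ size of $|\log L|$. The bound on $1/L(s,\phi_k)$ comes out of the same estimate on $|\log L|$ by exponentiating, since $|\log L|\ll (\log V)^{2/3}(\log\log V)^{1/3}$ forces $1/|L|=|e^{-\log L}|\leq e^{|\log L|}$, which is too large; so more carefully I would instead integrate $L'/L$ along a horizontal segment from the region $\sigma>1$ (where $\log|1/L|\ll 1$) into the target point, picking up at most (length)$\times\sup|L'/L|\ll (\log V)^{2/3}(\log\log V)^{1/3}$, giving the stated bound on $\log|1/L|$ and hence on $1/|L|$ up to the same order.

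The main obstacle will be calibrating the geometry of the discs and circles so that every circle used in Borel--Carath\'eodory and in the Cauchy differentiation stays strictly inside the zero-free region of Lemma \ref{lemma:zero-free-region}, while the radii remain of the exact order $1/((\log V)^{2/3}(\log\log V)^{1/3})$ needed to produce the claimed exponent; choosing $c'$ small enough relative to $c$ is what makes this work, and one must check the argument is uniform in $t$ with $V=k+|t|$. A secondary subtlety is that \eqref{eqn:L<<} is only stated in the wider region $\sigma>1-c/(\log V)^{2/3}$, so the larger Borel--Carath\'eodory disc must be placed to exploit that wider region for the upper bound on $\Re\log L$ while the differentiation circle lies in the narrower region where non-vanishing (and hence holomorphy of $\log L$) is guaranteed; reconciling these two different radii is the delicate bookkeeping step. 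Everything else is routine: the passage from $|L'/L|$ to $1/|L|$ by integration, and the absorption of lower-order factors into the stated orders of magnitude.
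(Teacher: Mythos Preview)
Your approach is correct and is exactly what the paper does: it simply invokes Theorem~3.11 of Titchmarsh (in the formulation of Coleman's Lemma~8), whose proof is the Borel--Carath\'eodory plus Hadamard three-circles argument you outline, with the parameters $\varphi(k,t)=c\log\log V$ and $\theta(k,t)=c'(\log\log V/\log V)^{2/3}$. One small caveat worth flagging: in your detailed steps you silently drop the three-circles interpolation, and Borel--Carath\'eodory alone gives only $|\log L|\ll\log\log V$, so Cauchy differentiation on a circle of radius $\asymp((\log V)^{2/3}(\log\log V)^{1/3})^{-1}$ yields $L'/L\ll(\log V)^{2/3}(\log\log V)^{4/3}$, off by a factor $\log\log V$ from the stated bound; it is the Hadamard three-circles step (interpolating between the small value of $\log L$ just to the right of $\Re s=1$ and the Borel--Carath\'eodory bound further in) that recovers the exponent $(\log\log V)^{1/3}$, so this should appear explicitly in your write-up.
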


\begin{proof}
  Note that we have \eqref{eqn:L'/L-k} and \eqref{eqn:L<<}.
  The proof now follows essentially that of Theorem 3.11 in \cite{titchmarsh1986theory} with the notation as in \cite[Lemma 8]{coleman1990zero}, and taking
  \[
    \varphi(k,t)=c\log\log V, \quad
    \theta(k,t)=c' \left(\frac{\log\log V}{\log V}\right)^{2/3}. \qedhere
  \]
\end{proof}

\begin{remark}
  Similar results can be proved for general number fields by the same method together with the theorems in \cite{coleman1990zero}.
\end{remark}


\section{Amplifier}\label{sec:amplifier}

As in \cite[\S2.5]{HuangXu2017sup-norm}, we should construct an amplifier by just supporting on primes.
Let
\[
  \mathcal P = \{ p \ {\rm prime}: N\leq p\leq 2N\}.
\]
be a large set of primes, and define
\begin{equation}\label{eqn:x_n}
  x_n := \left\{\begin{array}{ll}
                  w(n/N)\log(n) a_{k}(n), & {\rm if}\ n\in \mathcal P, \\
                  0, &   {\rm otherwise},
                \end{array}\right.
\end{equation}
where $w$ is a fixed, compactly-support positive function on the positive reals, with
\begin{equation}\label{eqn:w}
  \supp(w)\subset[1,2],\quad 0\leq w(r)\leq 1, \quad
  {\rm and} \quad \int_{-\infty}^{\infty}w(r)\dd r \neq 0.
\end{equation}
Hence $y_\ell$ (defined in \eqref{eqn:y_ell})
satisfies:
\begin{equation}\label{eqn:y_ell<<}
  y_\ell \ll \left\{\begin{array}{ll}
                      N, & \textrm{if } \ell=1,\\
                      \log^2 N, & \textrm{if } \ell=\ell_1\ell_2 \ \textrm{with}\ \ell_1,\ell_2\in \mathcal P, \\
                      0, & \textrm{otherwise}.
                    \end{array}\right.
\end{equation}

Define
\begin{equation}\label{eqn:A_N}
  A_k(N) := A_{\phi_k} = \sum_{p=2}^{\infty} w(p/N) (\log p) a_k(p)^2,
\end{equation}
where $A_{\phi_k}$ is defined in \eqref{eqn:A}. Let
\begin{equation}\label{eqn:L(s)}
  L_k(s) := \frac{\zeta(s)L(s,\chi)L(s,\phi_{2k})}{ (1+q^{-s}) \zeta(2s) },
\end{equation}
for $\Re(s)>1$. 
By \eqref{eqn:RS} and \eqref{eqn:sym^2}, we derive
\[
  L_k(s) = \sum_{n=1}^\infty \frac{a_{k}(n)^2}{n^s}.
\]
Then, by the Euler product of $L_k(s)$ (see \eqref{eqn:EP}), we have
\begin{equation} \label{eqn:L'/L}
  \begin{split}
     - \frac{L_k'}{L_k}(s) & = \sum_{p}\log p \bigg(\frac{p^{-s}}{1-p^{-s}}+\frac{\chi(p) p^{-s}}{1-\chi(p)p^{-s}} \\
     & \hskip 60pt
     + \frac{\alpha_{2k}(p) p^{-s}}{1-\alpha_{2k}(p) p^{-s}}
     + \frac{\beta_{2k}(p) p^{-s}}{1-\beta_{2k}(p) p^{-s}}
     -\frac{2p^{-2s}}{1-p^{-2s}} \bigg) - \log q \frac{q^{-s}}{1+q^{-s}} \\
     & = \sum_{p}\log p \sum_{j=1}^{\infty} \left(\frac{1+\chi(p)^j+\alpha_{2k}(p)^j+\beta_{2k}(p)^j}{p^{js}}
     -\frac{2}{p^{2js}} \right) + \log q \sum_{j=1}^{\infty} \frac{(-1)^j}{q^{js}} \\
     & =: \sum_{n=1}^\infty \frac{b_k(n)}{n^s}, \qquad \Re(s)>1,
  \end{split}
\end{equation}
say.
By the Taylor expansion, we derive that
\begin{equation}\label{eqn:b(n)}
  b_k(n) = \left\{\begin{array}{ll}
    (\log p) (a_{k}(p))^2, & {\rm if}\ n=p,\\
    (\log p)b_{p,j}, & {\rm if}\ n=p^j,\ j\geq 2, \\
    0, & {\rm otherwise},
  \end{array}\right.
\end{equation}
with $|b_{p,j}|\leq 7$ for all $p$ prime and $j\geq 2$.
Define
\begin{equation*}
  B_k(N) := \sum_{n=1}^{\infty} w(n/N)b_k(n).
\end{equation*}
By \eqref{eqn:w}, \eqref{eqn:b(n)}, and the prime number theorem, we have
\begin{equation} \label{eqn:A_N2B_N}
  A_k(N) = B_k(N) + O(\sqrt{N}).
\end{equation}
So we can estimate $A_k(N)$ by the estimation of $B_k(N)$.

\begin{lemma}\label{lemma:amplifier}
  Let $k\geq1$ be large enough. Suppose that
  $$\log N \gg (\log k)^{2/3+\delta},$$
  for some fixed small $\delta>0$. Then
  \[
    A_k(N) = N \widetilde{w}(1)  + O\left(\frac{N}{(\log k)^{A}}\right),
  \]
  where
  $
  \widetilde{w}(s)=\int_0^\infty w(y)y^{s-1}\dd y
  $
  is the Mellin transform of $w$.
\end{lemma}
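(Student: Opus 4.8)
The plan is to convert the arithmetic sum $A_k(N)$ into a contour integral of the logarithmic derivative $-L_k'/L_k$ and then read off the asymptotics by pushing the line of integration into the common zero-free region of the factors of $L_k(s)$. First I would use \eqref{eqn:A_N2B_N} to replace $A_k(N)$ by $B_k(N)=\sum_n w(n/N)b_k(n)$; the error $O(\sqrt N)$ is negligible against $N/(\log k)^A$, since $\sqrt N=\exp(\tfrac12\log N)$ already exceeds every power of $\log k$ under the hypothesis $\log N\gg(\log k)^{2/3+\delta}$. By Mellin inversion $w(n/N)=\frac{1}{2\pi i}\int_{(2)}\widetilde{w}(s)N^s n^{-s}\dd s$, and since $\sum_n b_k(n)n^{-s}=-L_k'/L_k(s)$ converges absolutely for $\Re(s)>1$ by \eqref{eqn:L'/L}, interchanging sum and integral gives
\[
  B_k(N)=\frac{1}{2\pi i}\int_{(2)}\widetilde{w}(s)\,N^s\Big(-\frac{L_k'}{L_k}(s)\Big)\dd s .
\]
The interchange is legitimate because $b_k(n)\ll\log n$ is supported on prime powers and $\widetilde{w}(\sigma+it)\ll_A(1+|t|)^{-A}$ decays rapidly on vertical lines.

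Next I would move the contour to a curve $\mathcal C:\ \Re(s)=1-\theta(t)$ with $\theta(t)=c'\big((\log(k+|t|))^{2/3}(\log\log(k+|t|))^{1/3}\big)^{-1}$, chosen small enough that $\mathcal C$ lies inside the common zero-free region of all factors of $L_k(s)=\zeta(s)L(s,\chi)L(s,\phi_{2k})\big((1+q^{-s})\zeta(2s)\big)^{-1}$. The only singularity crossed is a simple pole at $s=1$, arising solely from the pole of $\zeta(s)$: near $s=1$ one has $-L_k'/L_k(s)=\frac{1}{s-1}+(\text{holomorphic})$, so
\[
  \Res_{s=1}\Big[\widetilde{w}(s)\,N^s\Big(-\frac{L_k'}{L_k}(s)\Big)\Big]=N\,\widetilde{w}(1),
\]
which is exactly the claimed main term. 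That no other pole is encountered uses Lemma~\ref{lemma:zero-free-region} applied to $\phi_{2k}$ (with $V=2k+|t|$), the Vinogradov--Korobov region for $\zeta(s)$ and the classical region for the fixed character $L(s,\chi)$, together with the holomorphy and non-vanishing of $\zeta(2s)$ and $1+q^{-s}$ for $\Re(s)$ near $1$; a possible Siegel zero of the real character $\chi$ is harmless because $\theta(0)\to0$, so $\mathcal C$ lies to the right of any fixed exceptional zero once $k$ is large.

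Finally I would estimate the integral over $\mathcal C$. On $\mathcal C$ the bound $-L_k'/L_k(s)\ll(\log(k+|t|))^{2/3}(\log\log(k+|t|))^{1/3}$ follows by adding Lemma~\ref{lemma:upper-bound-L'/L} (for the $\phi_{2k}$ factor) to the analogous classical bounds for $\zeta'/\zeta$ and $L'/L(s,\chi)$ and the $O(1)$ contributions of $\zeta'(2s)/\zeta(2s)$ and $\frac{\dd}{\dd s}\log(1+q^{-s})$. Combined with $N^{\Re(s)}=N^{1-\theta(t)}$ and the rapid decay of $\widetilde{w}$, the shifted integral is
\[
  \ll N\int_{-\infty}^{\infty}(1+|t|)^{-A}\,N^{-\theta(t)}(\log(k+|t|))^{2/3}(\log\log(k+|t|))^{1/3}\,\dd t .
\]
For $|t|\le k$ one has $\theta(t)\gg\theta(0)\asymp(\log k)^{-2/3}(\log\log k)^{-1/3}$, hence $N^{-\theta(t)}\le\exp(-c\,\theta(0)\log N)$; the hypothesis $\log N\gg(\log k)^{2/3+\delta}$ then gives $\theta(0)\log N\gg(\log k)^{\delta}(\log\log k)^{-1/3}$, which beats $A\log\log k$ for every fixed $A$, so $N^{-\theta(t)}\ll(\log k)^{-A}$; for $|t|>k$ the decay of $\widetilde{w}$ alone saves $k^{-A+1}$. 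I expect the main obstacle to be this last, uniform estimation on the curved contour—keeping $\mathcal C$ inside the zero-free region of $L(s,\phi_{2k})$ for all $t$ while isolating the clean power-of-$\log k$ saving—rather than the residue computation; it is precisely here that the exponent $2/3+\delta$ in the hypothesis is used.
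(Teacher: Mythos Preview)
Your proposal is correct and follows essentially the same approach as the paper: write $B_k(N)$ as a Mellin integral of $-L_k'/L_k$, shift the contour into the zero-free region, collect the residue $N\widetilde{w}(1)$ at $s=1$, and bound the shifted integral using Lemmas~\ref{lemma:zero-free-region} and~\ref{lemma:upper-bound-L'/L} together with the rapid decay of $\widetilde{w}$. The only cosmetic difference is that the paper uses a simpler rectangular contour---the fixed vertical line $\Re(s)=\sigma_0=1-c/(\log k)^{2/3+\delta/2}$ for $|t|\le k$ and the line $\Re(s)=1$ for $|t|>k$---rather than your curved contour following the zero-free boundary, which slightly streamlines the final estimate.
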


\begin{proof}
  By the Mellin transform and by \eqref{eqn:L(s)} and \eqref{eqn:L'/L}, we derive
  \[
    \begin{split}
      B_k(N) = & \frac{1}{2\pi i} \int_{(2)} N^s \widetilde{w}(s) \left( -\frac{L_k'}{L_k}(s) \right)\dd s \\
      = & \frac{1}{2\pi i} \int_{(2)} N^s \widetilde{w}(s)  \left( -\frac{\zeta'}{\zeta}(s)-\frac{L'(s,\chi)}{L(s,\chi)}
      -\frac{L'(s,\phi_{2k})}{L(s,\phi_{2k})}+\frac{\zeta'}{\zeta}(2s) + \frac{\log q}{1+q^s} \right)\dd s.
    \end{split}
  \]
  Next we move the contour to the left, to one along the straight line segments $L_1,L_2,L_3$ defined by
  $$
    L_1=\left\{ \sigma_0+it: |t|\leq k \right\}, \quad
    L_2=\left\{ 1+it: |t|\geq k \right\},
  $$
  and the short horizontal segments
  $$
    L_3=\left\{ \sigma\pm ik: \sigma_0 \leq \sigma \leq 1 \right\},
  $$
  where $\sigma_0 = 1-\frac{c}{(\log k)^{2/3+\delta/2}}$ with $c$ being a small positive number such that $L_k(s)$ is zero-free
  on the boundary and right side of $L_1\cup L_2\cup L_3$.
  \begin{center}
    \begin{tikzpicture}
      \draw[black,->] (-1,0) -- (2.5,0)
            node[xshift=0.2cm] {$\sigma$};
      \draw[black,->] (0,-2) -- (0,2)
            node[yshift=0.2cm] {$t$};
      \filldraw[black] (1.5,0) circle (0.04) node at (1.7,-0.25) {\small $1$};
      \filldraw[black] (0,1.5) circle (0.04) node at (-0.25,1.5) {\small $k$};
      \filldraw[black] (0,-1.5) circle (0.04) node at (-0.35,-1.5) {\small $-k$};
      \filldraw[black] (1.2,0) circle (0.04) node at (0.95,-0.25) {\small $\sigma_0$};
      \draw [dashed] (1.5,-1.5) -- (1.5,1.5);
      \draw [dashed] (0,1.5) -- (1.2,1.5);
      \draw [dashed] (0,-1.5) -- (1.2,-1.5);
      \draw [thick] (1.5,-2.2) -- (1.5,-1.5) -- (1.2,-1.5) -- (1.2,1.5) -- (1.5,1.5) -- (1.5,2.2) ;
    \end{tikzpicture}
  \end{center}
  By \cite[Theorem 8.29]{iwaniec2004analytic}, we may also use
  the Vinogradov--Korobov bound $\zeta'(s)/\zeta(s)\ll (\log |t|)^{2/3}(\log\log |t|)^{1/3}$
  in this region.
  The integrals along the line segments $L_2$ and $L_3$ are trivially bounded by $O(k^{-100})$ by the rapid decay of $\widetilde{w}$.
  Together with Lemmas \ref{lemma:zero-free-region} and \ref{lemma:upper-bound-L'/L}, the new line $L_1$ gives an amount that is certainly
  \begin{equation*}
    \ll N (\log k) \exp\left( -\frac{\log N}{(\log k)^{2/3+2\delta/3}} \right) \ll \frac{N}{(\log k)^{100}}.
  \end{equation*}
  Now we need to analyze the residue of the pole from $-\zeta'(s)/\zeta(s)$.
  The residue at $s=1$ contributes $N \widetilde{w}(1)$.
  Hence, by \eqref{eqn:A_N2B_N}, we prove the lemma.
\end{proof}


\section{Sup-norm}\label{sec:supnorm}

We will use the amplified pre-trace formula \eqref{eqn:APTF}.
To obtain upper bounds we use a test function $h(r)$
which is localized for $r$ near $T$, with $T\geq 2$ being a parameter.
We need a suitable point-pair kernel and the coming estimate.
\begin{lemma}\label{lemma:k_T}
  For all $T\geq1$, there is a point-pair kernel $k_T\in C_c^\infty([0,\infty))$, supported on
  $[0,1]$, which satisfies the following properties:
  \begin{itemize}
    \item [(i)]   The spherical transform $h_T(r)$ is positive for all $r\in \mathbb R\cup i\mathbb R$,
    \item [(ii)]  For all $T\leq r\leq T+1$, $h_T(r)\gg 1$,
    \item [(iii)] For all $u\geq 0$, $|k_T(u)|\leq T$,
    \item [(iv)]  For all $T^{-2}\leq u\leq 1$, $|k_T(u)|\leq \frac{T^{1/2}}{u^{1/4}}$.
  \end{itemize}
\end{lemma}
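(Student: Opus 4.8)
The plan is to prescribe the spherical transform $h_T$ directly, engineering positivity and localization by hand, and then read off the support and size of $k_T$ from the three-step inversion recalled above. Fix once and for all a real, even, non-negative $\psi\in C_c^\infty(\mathbb{R})$ with $\supp\psi\subseteq[-\rho,\rho]$, where $\rho>0$ is a small absolute constant to be pinned down below, chosen so that its transform $\widehat\psi(r)=\int_{-\infty}^\infty\psi(\eta)\cos(r\eta)\,\dd\eta$ is bounded below by a positive constant on $[0,1]$ (for instance $\psi=\chi\star\chi$ with $\chi$ a narrow non-negative bump, so that $\widehat\psi=|\widehat\chi|^2$ has no zero near the origin). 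Set $\phi(\eta):=\psi(\eta)\cos(T\eta)$, again real, even, and supported in $[-\rho,\rho]$, and define
\[
  F(r):=\int_{-\infty}^\infty\phi(\eta)\cos(r\eta)\,\dd\eta=\tfrac12(\widehat\psi(r-T)+\widehat\psi(r+T)),\qquad h_T(r):=F(r)^2,
\]
taking $k_T$ to be the inverse Selberg transform of $h_T$. Properties (i) and (ii) then come for free: since $\phi$ is real and even, $F(r)=\int\phi(\eta)\cos(r\eta)\,\dd\eta$ is real for every $r\in\mathbb{R}\cup i\mathbb{R}$ (on the imaginary axis $F(is)=\int\phi(\eta)\cosh(s\eta)\,\dd\eta\in\mathbb{R}$), so $h_T=F^2\ge0$ there, which is (i); and for $T\le r\le T+1$ the term $\widehat\psi(r-T)$ is $\gg1$ while $\widehat\psi(r+T)=O_A(T^{-A})$ by rapid decay, so $F(r)\gg1$ and $h_T(r)\gg1$, which is (ii).

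The support statement is a Paley--Wiener matter. As $\phi$ is supported in $[-\rho,\rho]$, $F$ is entire of exponential type $\rho$, hence $h_T=F^2$ has type $2\rho$, and its Fourier transform $g$ (the first inversion step) is supported in $[-2\rho,2\rho]$. Since $2q(v)=g(2\operatorname{arcsinh}\sqrt v)$, we get $q(v)=0$ for $v>\sinh^2\rho$, and then $k_T(u)=-\frac1\pi\int_{u/4}^\infty(v-u/4)^{-1/2}\,\dd q(v)$ forces $k_T(u)=0$ unless $u/4\le\sinh^2\rho$. Choosing $\rho$ with $4\sinh^2\rho\le1$ (any fixed $\rho\le\log\frac{1+\sqrt5}{2}$) places $\supp k_T$ in $[0,1]$.

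The real work is the size bound, and I claim both (iii) and (iv) follow from the single uniform estimate
\[
  k_T(u)\ll T\,(1+T\sqrt u)^{-1/2},\qquad 0\le u\le1,
\]
since $u\le T^{-2}$ gives $|k_T(u)|\ll T$ which with $k_T\equiv0$ on $(1,\infty)$ yields (iii), while $T^{-2}\le u\le1$ gives $|k_T(u)|\ll T(T\sqrt u)^{-1/2}=T^{1/2}u^{-1/4}$, which is (iv). To prove it I would first compute $g$ explicitly: writing $g=\phi\star\phi$ up to normalization and using $\cos(T\eta)\cos(T(w-\eta))=\tfrac12\cos(Tw)+\tfrac12\cos(T(2\eta-w))$, the stationary (non-oscillatory) part gives $g(w)=\tfrac12\cos(Tw)\,(\psi\star\psi)(w)+O_A(T^{-A})$ with a fixed smooth envelope. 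Substituting into $q$ via $v=\sinh^2(w/2)$ and then into the Abel integral for $k_T$, the amplitude $T$ arises from the derivative of the phase $\cos(Tw(v))$, and the gain $(1+T\sqrt u)^{-1/2}$ from the interaction of this oscillation with the square-root singularity at $v=u/4$, analyzed by stationary phase.

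I expect this last estimate near the endpoint singularity to be the main obstacle: one must treat the algebraic singularity $(v-u/4)^{-1/2}$ and the oscillation at frequency $T$ simultaneously, separating the regime $T\sqrt u\lesssim1$ (no cancellation, trivial bound $T$) from $T\sqrt u\gtrsim1$ (genuine stationary-phase gain $T^{-1/2}u^{-1/4}$). These are precisely the bounds established for such localized point-pair kernels in \cite{IwaniecSarnak1995norms} and reused in \cite{HarcosTemplier2013sup,templier2015hybrid,blomer2016sup-norm}, so I would either adapt their stationary-phase computation or invoke the known asymptotics of the spherical function directly.
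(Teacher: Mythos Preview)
Your proposal is correct and reconstructs precisely the standard construction that the paper defers to by citing \cite[Lemma~2.1]{templier2015hybrid}: taking $h_T$ as the square of a band-limited bump centered at $T$, getting support from Paley--Wiener, and extracting the size bounds (iii)--(iv) from the oscillatory Abel integral via stationary phase, exactly as in \cite{IwaniecSarnak1995norms,templier2015hybrid}. One small remark: in (i) the paper's ``positive'' should be read as ``non-negative'' (this is all the amplified pre-trace inequality needs), which is what your $h_T=F^2$ delivers; $F$ itself can vanish, so strict positivity is neither claimed nor required.
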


\begin{proof}
  See Templier \cite[Lemma 2.1]{templier2015hybrid}.
\end{proof}

%

Hence, by \eqref{eqn:APTF}, we have
\begin{equation}\label{eqn:Iwaniec-Sarnak}
  |A_{k}(N)|^2 \frac{|\phi_k(z)|^2}{\|\phi_k\|_2^2}
  \ll \sum_{\ell} \frac{|y_\ell|}{\sqrt{\ell}} \sum_{\gamma\in \mathcal M(\ell,q)} |k_{T_k}(\gamma z,z)|,
\end{equation}
with $x_n$ being defined as in \eqref{eqn:x_n}.

Now using Lemmas \ref{lemma:M_*}, \ref{lemma:M_u}, and \ref{lemma:M_p}, the same argument as in \cite[\S5]{HuangXu2017sup-norm} gives
\begin{equation}\label{eqn:sum_l<<}
  \sum_{\ell} \frac{|y_\ell|}{\sqrt{\ell}} \sum_{\gamma\in \mathcal M(\ell,q)} |k_{T_k}(\gamma z,z)|
  \ll_q (NT_k)^\varepsilon (NT_k+N^3T_k^{1/2}+N^2T_k^{1/2}y).
\end{equation}

From the bound via Fourier expansion in Lemma \ref{lemma:Bound_FE}, we can assume without loss of
generality when establishing Theorem \ref{thm:sup-norm} that
\begin{equation*}
  y\ll T_k^{1/4}.
\end{equation*}
Combining Lemma \ref{lemma:amplifier} with
\eqref{eqn:Iwaniec-Sarnak} and \eqref{eqn:sum_l<<}, we obtain that
\begin{equation*}
  \frac{|\phi_k(z)|^2}{\|\phi_k\|_2^2} \ll (NT_k)^\varepsilon \left( T_k N^{-1} + N T_k^{1/2} + T_k^{1/2}y \right).
\end{equation*}
By choosing $N := T_k^{1/4}$, we obtain $\phi_k(z) \ll T_k^{3/8+\varepsilon}\|\phi_k\|_2$,
as claimed in Theorem \ref{thm:sup-norm}.

\begin{remark}
  In \cite{jung2016quantitative}, Jung proved the same sup-norm bounds for almost all Hecke--Maass forms in short intervals by using a sharp estimate on the variance of the shifted convolution sums.
  It's possible to prove that for a fixed compact set $\Omega\in\mathbb{X}$,
  \[
    \sup_{z\in\Omega}|\phi_k(z)| \ll_{\Omega,\varepsilon} T_k^{3/8+\varepsilon} \|\phi_k\|_2,
  \]
  by using quantitative QUE and shifted convolution sums (see \cite{jung2016quantitative}), which may be proved by subconvexity bounds for Rankin--Selberg L-functions (see e.g. 
  \cite{LiuYe2002subconvexity} and 
  \cite{LauLiuYe2006new}).
\end{remark}

%

%


\section{Second moment of Hecke L-functions}\label{sec:secondmoment}

In this section, our main goal is to prove
the second moment of Hecke L-functions (Theorem \ref{thm:second_moment}).
To do this, we will first give an approximate functional equation of $L(1/2+it,\phi_k)$, discuss gaps between angles of integral ideals, and then prove a large sieve inequality for our family of Hecke Gr\"{o}ssencharacters.

\subsection{The approximate functional equation}

Define the completed L-function by
\[
  \Lambda(s,\phi_k) := L_\infty(s,\phi_k)L(s,\phi_k),
\]
with
$$
  L_\infty(s,\phi_k) := \Big(\frac{\sqrt{q}}{\pi}\Big)^s \Gamma\Big(\frac{s+iT_k}{2}\Big)\Gamma\Big(\frac{s-iT_k}{2}\Big).
$$
By Hecke \cite{hecke1920eine}, we have the functional equation
\[
  \Lambda(s,\phi_k) = \omega_k \Lambda(1-s,\phi_k),
\]
where $|\omega_k|=1$.
We will need the following approximate functional equation.
Note that we remove the dependence of $k$ for the coefficients $c(\mathfrak{a})$ and the length of the sum.

\begin{lemma}\label{lemma:AFE}
  Let $T\geq2$ be sufficiently large, $T\leq T_k\leq 2T$, and $|t|\leq T^{1-\varepsilon}$. For any $\varepsilon>0$, we have
  \begin{equation*}
    L(1/2+it,\phi_k) \ll_{q,\varepsilon} T^\varepsilon  \int_{\varepsilon-iT^{\varepsilon}}^{\varepsilon+iT^{\varepsilon}} \Big|\sum_{\substack{\mathfrak{a}\subset \mathcal{O}\\\N(\mathfrak{a})\leq T^{1+\varepsilon}}} \frac{\Xi_k(\mathfrak{a})}{\N(\mathfrak{a})^{1/2+it+s}}\Big| |\dd s| + 1.
  \end{equation*}
\end{lemma}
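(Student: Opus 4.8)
The plan is to derive this approximate functional equation (AFE) from the standard machinery applied to the completed L-function $\Lambda(s,\phi_k) = L_\infty(s,\phi_k)L(s,\phi_k)$, which satisfies the functional equation $\Lambda(s,\phi_k) = \omega_k \Lambda(1-s,\phi_k)$ with $|\omega_k|=1$. First I would introduce a suitable even test function $G(s)$, holomorphic and of rapid decay in vertical strips, normalized by $G(0)=1$ (for instance $G(s)=e^{s^2}$), and consider the contour integral
\[
  I := \frac{1}{2\pi i} \int_{(\varepsilon)} \Lambda(1/2+it+s,\phi_k) \, G(s) \, X^s \, \frac{\dd s}{s},
\]
for a length parameter $X$ to be chosen of size roughly $\sqrt{q}\,T/\pi$. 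Shifting the contour to $\Re(s)=-\varepsilon$ and picking up the pole at $s=0$ gives $\Lambda(1/2+it,\phi_k)$ as the residue, after which I apply the functional equation to the shifted integral to fold it back into a second Dirichlet series; this is the classical derivation of the AFE and produces two sums of length governed by the conductor.

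The key analytic input is the size of the archimedean factor $L_\infty(1/2+it+s,\phi_k)$ relative to $L_\infty(1/2+it,\phi_k)$, since dividing through by the latter converts the completed object back into a bound for $L(1/2+it,\phi_k)$ itself. Using Stirling's formula on the two Gamma factors $\Gamma((s+iT_k)/2)\Gamma((s-iT_k)/2)$, together with the hypotheses $T\leq T_k\leq 2T$ and $|t|\leq T^{1-\varepsilon}$, the ratio of archimedean factors is controlled with the spectral parameter $T_k$ comparable to $T$; the condition $|t|\leq T^{1-\varepsilon}$ ensures that $t$ is a genuinely lower-order perturbation so that the effective conductor is $\asymp T^2$ and the Dirichlet sum truncates at $\N(\mathfrak{a})\leq T^{1+\varepsilon}$. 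The factor $T^\varepsilon$ out front absorbs the convexity-type slack, and the $|\dd s|$ integral over $[\varepsilon-iT^\varepsilon,\varepsilon+iT^\varepsilon]$ is exactly the smoothing weight $G(s)X^s/s$ packaged into a short vertical segment, with the tail beyond height $T^\varepsilon$ contributing negligibly by the rapid decay of $G$.

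The main obstacle I anticipate is bookkeeping the dependence on $T$ uniformly and confirming that only one Dirichlet series survives in the final statement rather than the usual two dual sums. Because the coefficients $c(\mathfrak{a})$ and the truncation length are advertised as independent of $k$, I would exploit the self-duality coming from $|\omega_k|=1$ and the symmetry of $L_\infty$ to bound both dual sums by a single expression of the stated form, at the cost of the harmless additive $+1$ accounting for the trivial contribution (and any pole issues, of which there are none here since $\phi_k$ is cuspidal). The referee's acknowledged simplification suggests a streamlined route that avoids separately estimating both halves; concretely, after bounding the ratio of Gamma factors, one writes the surviving integral directly as a contour integral of the truncated Dirichlet series $\sum_{\N(\mathfrak{a})\leq T^{1+\varepsilon}} \Xi_k(\mathfrak{a})\N(\mathfrak{a})^{-1/2-it-s}$ against the smoothing kernel, yielding the inequality as stated. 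I would take care that all implied constants depend only on $q$ and $\varepsilon$, which follows since the field $K$ is fixed throughout.
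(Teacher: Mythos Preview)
Your proposal is correct and follows essentially the same route as the paper: invoke the standard AFE for $\Lambda(s,\phi_k)$ with a rapidly decaying $G(s)$ (the paper also takes $G(s)=e^{s^2}$), use Stirling together with $|t|\le T^{1-\varepsilon}$ and $T_k\asymp T$ to truncate the Dirichlet sums at $\N(\mathfrak a)\le T^{1+\varepsilon}$ and the $s$-integral at height $T^\varepsilon$, and then bound the Gamma ratio $L_\infty(1/2+it+s,\phi_k)/L_\infty(1/2+it,\phi_k)\ll (qT)^\varepsilon$. The only cosmetic differences are that the paper uses the symmetric AFE (no auxiliary $X$) and reduces the two dual sums to one by a bare triangle inequality rather than any appeal to self-duality; the additive $+1$ simply absorbs the negligible truncation errors.
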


\begin{proof}
  By Iwaniec--Kowalski \cite[\S5.2]{iwaniec2004analytic}, we have 
  \[
    L(1/2+it,\phi_k)
    =  \sum_{\mathfrak{a}\subset \mathcal{O}} \frac{\Xi_k(\mathfrak{a})}{\N(\mathfrak{a})^{1/2+it}}
    V_{k,t}(\N(\mathfrak{a}))
    + \eta_k \sum_{\mathfrak{a}\subset \mathcal{O}} \frac{\Xi_k(\mathfrak{a})}{\N(\mathfrak{a})^{1/2-it}}
    V_{k,-t}(\N(\mathfrak{a})),
  \]
  where $|\eta_k|=1$ and 
  \begin{equation}\label{eqn:V(y)}
    V_{k,t}(y) := \frac{1}{2\pi i} \int_{(2)} \frac{L_\infty(1/2+it+s,\phi_k)}{L_\infty(1/2+it,\phi_k)} y^{-s} G(s) \frac{\dd s}{s}, 
  \end{equation}
  with $G(s)=e^{s^2}.$

  Now we want to truncate the sums and remove the dependence of $k$ for weight functions.
  Note that $T_k\asymp T$ and $|t|\ll T^{1-\varepsilon}$.
  By Stirling's formula, we can truncate the sums at $\N(\mathfrak{a})\leq T^{1+\varepsilon}$ with negligible errors by shifting the contour of integration to the right.
  By the triangle inequality, we only need to consider the first sum.

  From now on, we assume that $\N(\mathfrak{a})\leq T^{1+\varepsilon}$. In \eqref{eqn:V(y)}, we can move the line of integration to $\Re(s)=\varepsilon$, and then truncate at $\Im(s)\ll T^{\varepsilon}$ with a negligible error again, getting
  \[
    V_{k,t}(y) = \frac{1}{2\pi i} \int_{\varepsilon-iT^{\varepsilon}}^{\varepsilon+iT^{\varepsilon}} \frac{L_\infty(1/2+it+s,\phi_k)}{L_\infty(1/2+it,\phi_k)} y^{-s} G(s) \frac{\dd s}{s} + O(T^{-A}).
  \]
  Since by Stirling's formula we have
  \[
    \frac{L_\infty(1/2+it+s,\phi_k)}{L_\infty(1/2+it,\phi_k)}
    \ll_\varepsilon (qT)^\varepsilon,
  \]
  we obtain
  \[
    \begin{split}
    L(1/2+it,\phi_k) &
    \ll \Big| \sum_{\mathfrak{a}\subset \mathcal{O}} \frac{\Xi_k(\mathfrak{a})}{\N(\mathfrak{a})^{1/2+it}}
    V_{k,t}(\N(\mathfrak{a})) \Big|
    \\
    & \ll_{q,\varepsilon} T^\varepsilon  \int_{\varepsilon-iT^{\varepsilon}}^{\varepsilon+iT^{\varepsilon}} \Big|\sum_{\substack{\mathfrak{a}\subset \mathcal{O}\\\N(\mathfrak{a})\leq T^{1+\varepsilon}}} \frac{\Xi_k(\mathfrak{a})}{\N(\mathfrak{a})^{1/2+it+s}}\Big| |\dd s| + 1,
    \end{split}
  \]
  as claimed.
\end{proof}

\subsection{Gaps between angles} \label{subsec:angles}
Note that if $\alpha$ is a generator of an integral ideal $\mathfrak{a}$, then so is $\pm \epsilon_q^n \alpha$, for any $n\in\mathbb{Z}$.
So for $\mathfrak{a}=(\alpha)$ with $\alpha=a+b\omega_q$, we can define an angle $t_\mathfrak{a}\ (\mod 2\log \epsilon_q)$ of $\mathfrak{a}$ by
$e^{t_\mathfrak{a}} = |\frac{\alpha}{\tilde{\alpha}}|=|\frac{a+b\omega_q}{a+b\tilde\omega_q}|$.

\begin{lemma}\label{lemma:angle-gap}
  Let $\|x\|_{\mathbb Z}:=\min_{n\in\mathbb Z}|x-n|$. Then we have
  \begin{itemize}
    \item [i)]  If $t_\mathfrak a \neq 0 \ \mod 2\log\epsilon_q$, then 
    	\begin{equation*}
		  \big\| \frac{t_\mathfrak a}{2\log\epsilon_q} \big\|_\mathbb Z
		  \gg
		  \frac{1}{\sqrt{\N(\mathfrak a)}};
		\end{equation*}
    \item [ii)] If $t_\mathfrak a \neq t_\mathfrak b \ \mod 2\log\epsilon_q$, then 
    	\[
		  \big\| \frac{1}{2\log\epsilon_q}(t_\mathfrak a - t_\mathfrak b) \big\|_\mathbb Z
		  \gg
		   \frac{1}{\sqrt{\N(\mathfrak a)\N(\mathfrak b)}}.
		\]
  \end{itemize}
\end{lemma}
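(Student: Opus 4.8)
The plan is to prove part (i) first, by passing from the ideal $\mathfrak a$ to a carefully chosen generator, and then to deduce part (ii) as a formal consequence of (i) applied to a product.

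For (i), I would start by choosing, among the generators $\pm\epsilon_q^n\alpha$ of $\mathfrak a$, the one $\gamma$ whose angle $t_\gamma=\log|\gamma/\tilde\gamma|$ is the representative of $t_\mathfrak a$ nearest to $0$. Since $\N(\epsilon_q)=-1$ gives $|\epsilon_q/\tilde\epsilon_q|=\epsilon_q^2$, multiplying $\alpha$ by $\epsilon_q$ shifts the angle by exactly $2\log\epsilon_q$; hence such a $\gamma$ satisfies $|t_\gamma|\leq\log\epsilon_q$ and $|t_\gamma|=2\log\epsilon_q\,\|t_\mathfrak a/(2\log\epsilon_q)\|_\mathbb Z$. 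Writing $\N=\N(\mathfrak a)=|\gamma\tilde\gamma|$, the defining relation $|\gamma/\tilde\gamma|=e^{t_\gamma}$ yields $|\gamma|=\sqrt{\N}\,e^{t_\gamma/2}$ and $|\tilde\gamma|=\sqrt{\N}\,e^{-t_\gamma/2}$; normalizing $\gamma>0$, the sign of $\tilde\gamma$ equals the sign of $\N(\gamma)$.

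The crux is an integrality/discreteness input. Writing $\gamma=c+d\omega_q$ with $c,d\in\mathbb Z$, one has $\gamma-\tilde\gamma=d\sqrt q\in\sqrt q\,\mathbb Z$ and $\gamma+\tilde\gamma=2c+d\in\mathbb Z$. If $\N(\gamma)>0$ then $\gamma-\tilde\gamma=2\sqrt{\N}\sinh(t_\gamma/2)$, whereas if $\N(\gamma)<0$ then $\gamma+\tilde\gamma=2\sqrt{\N}\sinh(t_\gamma/2)$; in either case $2\sqrt{\N}\sinh(t_\gamma/2)$ is a nonzero element of $\mathbb Z$ or of $\sqrt q\,\mathbb Z$ whenever $t_\gamma\neq0$, so its absolute value is $\geq\min(1,\sqrt q)=1$. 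This forces $|\sinh(t_\gamma/2)|\gg\N^{-1/2}$, and since $|t_\gamma/2|\leq\tfrac12\log\epsilon_q$ is bounded we have $|\sinh(t_\gamma/2)|\asymp|t_\gamma|$ there (implied constants depending only on $q$), whence $|t_\gamma|\gg|\sinh(t_\gamma/2)|\gg\N^{-1/2}$. Dividing by $2\log\epsilon_q$ gives (i).

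For (ii), I would apply (i) to $\delta:=\alpha\tilde\beta$, a generator of the ideal $\mathfrak a\tilde{\mathfrak b}$. Since $\tilde\delta=\tilde\alpha\beta$, one computes $|\delta/\tilde\delta|=|\alpha/\tilde\alpha|\cdot|\tilde\beta/\beta|=e^{t_\mathfrak a-t_\mathfrak b}$, so $t_{(\delta)}\equiv t_\mathfrak a-t_\mathfrak b\pmod{2\log\epsilon_q}$, while $|\N(\delta)|=|\N(\alpha)||\N(\beta)|=\N(\mathfrak a)\N(\mathfrak b)$. The hypothesis $t_\mathfrak a\neq t_\mathfrak b$ is exactly $t_{(\delta)}\neq0$, so (i) yields $\big\|\tfrac{1}{2\log\epsilon_q}(t_\mathfrak a-t_\mathfrak b)\big\|_\mathbb Z=\big\|t_{(\delta)}/(2\log\epsilon_q)\big\|_\mathbb Z\gg(\N(\mathfrak a)\N(\mathfrak b))^{-1/2}$, as desired. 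The main obstacle — really the only nontrivial point — is the case split on the sign of $\N(\gamma)$ together with the recognition that the relevant combination of $\gamma,\tilde\gamma$ (their difference when the norm is positive, their trace when it is negative) always lands in a fixed rank-one lattice and so cannot be small unless it vanishes; the rest is bookkeeping and the elementary comparison $|\sinh x|\asymp|x|$ on $|x|\leq\tfrac12\log\epsilon_q$.
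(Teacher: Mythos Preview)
Your proof is correct and follows essentially the same approach as the paper: choose a generator with angle in a fundamental domain, split on the sign of $\N(\gamma)$, and use that the relevant integral combination $\gamma\mp\tilde\gamma$ lies in a fixed lattice and is nonzero; part (ii) is reduced to (i) via $\mathfrak a\tilde{\mathfrak b}$ in both. The only cosmetic difference is that you package the estimate through $\sinh(t_\gamma/2)=(\gamma\mp\tilde\gamma)/(2\sqrt{\N})$ directly, whereas the paper routes it through $\tanh(t_\gamma/2)$ together with the bound $|\gamma\pm\tilde\gamma|\asymp\sqrt{\N}$; your version is slightly more streamlined but the idea is identical.
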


\begin{proof}
  (i) We can assume
  $\|\frac{t_\mathfrak a}{2\log\epsilon_q}\|_{\mathbb Z} \leq 1/10$,
  otherwise we already prove the lemma.
  Write $\epsilon=\epsilon_q$. Note that we can choose $t_\mathfrak{a}\in(-\log \epsilon,\log \epsilon]$, so
  in order to give a lower bound for $\| \frac{t_\mathfrak a}{2\log\epsilon} \|_\mathbb Z$,  we only need to consider
  $| \frac{t_\mathfrak a}{2\log\epsilon}|$.
  Since we have the action of $U=\pm\epsilon^{\mathbb{Z}}$, we can choose a generator $\alpha=a+b\omega_q$ of $\mathfrak{a}$ such that
  \[
    e^{t_\mathfrak{a}} = e^{t_\alpha} := |\frac{a+b\omega_q}{a+b\tilde\omega_q}|
    =\frac{(a+b\omega_q)^2}{\N(\mathfrak{a})}
    =\frac{\N(\mathfrak{a})}{(a+b\tilde\omega_q)^2} \in (\epsilon^{-1},\epsilon),
  \]
  and at least one of $a+\frac{b}{2}$ and $b$ is positive.
  Hence
  \begin{equation}\label{eqn:ratio1}
    (a+\frac{b}{2})^2 + (\frac{b\sqrt{q}}{2})^2 = \frac{1}{2} \left((a+b\omega_q)^2 + (a+b\tilde\omega_q)^2 \right) \asymp_q \N(\mathfrak{a}).
  \end{equation}

  If $\N(\alpha)>0$, we can assume $a+\frac{b}{2}>0$ (replacing $\alpha$ by $-\alpha$ if necessary). Note that $\N(\alpha) = (a+\frac{b}{2})^2-\frac{b^2 q}{4}>0$. By $t_\mathfrak a \neq 0 \ \mod 2\log\epsilon$, we have
  \begin{equation*}
    \frac{\sqrt{q}}{2} \leq \frac{|b|\sqrt{q}}{2} < a+\frac{b}{2}.
  \end{equation*}
  Thus by \eqref{eqn:ratio1}, we have
  \[
    a+\frac{b}{2} \asymp \sqrt{\N(\mathfrak{a})}.
  \]
  Note that
  $e^{t_\mathfrak{a}/2} = \frac{a+b\omega_q}{\sqrt{\N(\mathfrak{a})}}$ and
  $e^{-t_\mathfrak{a}/2} = \frac{a+b\tilde\omega_q}{\sqrt{\N(\mathfrak{a})}}$,
  which give us
  \[
    |t_\mathfrak a /2| \geq |\tanh t_\mathfrak a/2|
    = \frac{|b|\sqrt{q}}{2(a+\frac{b}{2})} \gg \frac{1}{\sqrt{\N(\mathfrak{a})}}.
  \]
  Thus
  \begin{equation*}
    \min_{t_\mathfrak a\neq 0 \mod 2\log\epsilon}
    \| \frac{t_\mathfrak a}{2\log\epsilon} \|_\mathbb Z
    \gg
    \frac{1}{\sqrt{\N(\mathfrak a)}}.
  \end{equation*}

  Similarly, if $\N(\alpha)<0$, we can assume $b>0$. Note that $-\N(\mathfrak{a})=\N(\alpha) = (a+\frac{b}{2})^2-\frac{b^2 q}{4}<0$. By $t_\mathfrak a \neq 0 \ \mod 2\log\epsilon$, we have
  \begin{equation*}
    \frac{1}{2}\leq |a+\frac{b}{2}| < \frac{b\sqrt{q}}{2}.
  \end{equation*}
  Thus by \eqref{eqn:ratio1} again, we have
  \[
    \frac{b\sqrt{q}}{2} \asymp \sqrt{\N(\mathfrak{a})}.
  \]
  Note that
  $e^{t_\mathfrak{a}/2} = \frac{a+b\omega_q}{\sqrt{\N(\mathfrak{a})}}$ and
  $e^{-t_\mathfrak{a}/2} = - \frac{a+b\tilde\omega_q}{\sqrt{\N(\mathfrak{a})}}$,
  which give us
  \[
    |t_\mathfrak a /2| \geq |\tanh t_\mathfrak a/2|
    = \frac{2(a+\frac{b}{2})}{|b|\sqrt{q}} \gg \frac{1}{\sqrt{\N(\mathfrak{a})}}.
  \]
  Thus
  \begin{equation*}
    \min_{t_\mathfrak a\neq 0 \mod 2\log\epsilon}
    \| \frac{t_\mathfrak a}{2\log\epsilon} \|_\mathbb Z
    \gg
    \frac{1}{\sqrt{\N(\mathfrak a)}}.
  \end{equation*}

  (ii) Consider the ideal
  $\mathfrak{c} = \mathfrak{a} \tilde{\mathfrak b}$ in (i),
  where $\tilde{\mathfrak{b}}$ is the conjugate ideal of $\mathfrak{b}$.
  Since $t_\mathfrak a \neq t_\mathfrak b \ \mod 2\log\epsilon_q$, we have $t_\mathfrak c = t_\mathfrak{a} - t_\mathfrak b  \neq 0 \ \mod 2\log\epsilon_q$. By (i), we have
  \[
    \big\| \frac{1}{2\log\epsilon_q} (t_\mathfrak a - t_\mathfrak b)\big\|_\mathbb Z
    = \big\| \frac{t_\mathfrak c}{2\log\epsilon_q} \big\|_\mathbb Z \gg
    \frac{1}{\sqrt{\N(\mathfrak c)}} = \frac{1}{\sqrt{\N(\mathfrak a)\N(\mathfrak b)}}.
  \]
  This completes the proof of our lemma.
\end{proof}

According to the above proof, we can divide the integral ideals into two classes. We say $\mathfrak{a}\in\mathfrak{A}_1$ if we can choose a generator $\alpha$ such that 
$t_\alpha\in (-\log \epsilon_q,\log \epsilon_q]$ and $\N(\alpha)>0$.
And we say $\mathfrak{a}\in\mathfrak{A}_2$ if we can choose a generator $\alpha$ such that 
$t_\alpha\in (-\log \epsilon_q,\log \epsilon_q]$ and $\N(\alpha)<0$.
Note that $\mathfrak{A}_1\cap\mathfrak{A}_2=\varnothing$, and $\mathfrak{A}_1\cup\mathfrak{A}_2$ is equal to the set of all integral ideals. We say $\mathfrak{a}$ is primitive if for our choice of $\alpha=a+b\omega_q$ we have $\gcd(a,b)=1$.

\subsection{A large sieve inequality}

In order to bound the second moment of Hecke L-functions, we will use the following large sieve inequality.

\begin{lemma}\label{lemma:large_sieve}
  Let $N\geq2$ and $K\geq2$ be given.
  Then for any sequence $\{c(\mathfrak a)\}$, we have
  \begin{equation*}
    \begin{split}
      & \sum_{1\leq k \leq K}  \Big|\sum_{\N(\mathfrak a)\leq N} \Xi_k(\mathfrak a) c(\mathfrak a) \N(\mathfrak a)^{-1/2}\Big|^{2}
      \ll
      (K+N)  \sum_{j=1,2} \sum_{\substack{\mathfrak a \ \mathrm{primitive} \\ \mathfrak{a}\in\mathfrak{A}_j \\ \N(\mathfrak a)\leq N}}
    	\Big(\sum_{\substack{t_\mathfrak b = t_\mathfrak a \\ \mathfrak{b}\in\mathfrak{A}_j \\ \N(\mathfrak b)\leq N}}
			|c(\mathfrak b)|\N(\mathfrak b)^{-1/2}\Big)^2.
    \end{split}
  \end{equation*}
\end{lemma}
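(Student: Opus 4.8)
The plan is to recognize the Hecke Gr\"ossencharacter values $\Xi_k(\mathfrak a)$ as additive characters in the variable $k$, so that the left-hand side turns into a genuine large sieve sum over the arithmetic progression $1\le k\le K$. Recalling the definition of $\Xi_k$ and of the angle $t_\mathfrak a$, I would set $\theta_\mathfrak a := \frac{t_\mathfrak a}{2\log\epsilon_q}\in\mathbb R/\mathbb Z$, so that $\Xi_k(\mathfrak a)=e(k\theta_\mathfrak a)$ with $e(x)=e^{2\pi i x}$. The inner sum then becomes $\sum_{\N(\mathfrak a)\le N}c(\mathfrak a)\N(\mathfrak a)^{-1/2}e(k\theta_\mathfrak a)$, an exponential sum in $k$ whose frequencies are the angles $\theta_\mathfrak a$.

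First I would group the ideals both by their angle and by the two classes $\mathfrak A_1,\mathfrak A_2$ of \S\ref{subsec:angles}. Writing $S_k^{(j)}=\sum_{\mathfrak a\in\mathfrak A_j,\,\N(\mathfrak a)\le N}c(\mathfrak a)\N(\mathfrak a)^{-1/2}e(k\theta_\mathfrak a)$, the full inner sum equals $S_k^{(1)}+S_k^{(2)}$, and since $|S_k^{(1)}+S_k^{(2)}|^2\le 2|S_k^{(1)}|^2+2|S_k^{(2)}|^2$ it suffices to treat each class separately. For fixed $j$, I collect the ideals sharing a common angle: for each distinct value $\theta$ occurring among $\{\theta_\mathfrak a:\mathfrak a\in\mathfrak A_j,\,\N(\mathfrak a)\le N\}$, put $a_{j,\theta}=\sum_{\mathfrak b\in\mathfrak A_j,\,\theta_\mathfrak b=\theta,\,\N(\mathfrak b)\le N}c(\mathfrak b)\N(\mathfrak b)^{-1/2}$, so that $S_k^{(j)}=\sum_\theta a_{j,\theta}e(k\theta)$.

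The crucial input is that the frequencies are well spaced. By Lemma \ref{lemma:angle-gap}(ii), any two ideals with $t_\mathfrak a\ne t_\mathfrak b$ satisfy $\|\theta_\mathfrak a-\theta_\mathfrak b\|_\mathbb Z\gg 1/\sqrt{\N(\mathfrak a)\N(\mathfrak b)}\ge 1/N$, so the distinct frequencies $\theta$ form a $\delta$-spaced set in $\mathbb R/\mathbb Z$ with $\delta\gg 1/N$. Applying the dual form of the Montgomery--Vaughan large sieve inequality \cite{montgomery1973large} to the $K$ consecutive integers $1\le k\le K$, I get $\sum_{k=1}^K|S_k^{(j)}|^2\ll (K+\delta^{-1})\sum_\theta|a_{j,\theta}|^2\ll (K+N)\sum_\theta|a_{j,\theta}|^2$. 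Bounding $|a_{j,\theta}|^2\le\big(\sum_{\mathfrak b\in\mathfrak A_j,\,\theta_\mathfrak b=\theta}|c(\mathfrak b)|\N(\mathfrak b)^{-1/2}\big)^2$ by the triangle inequality and summing over the distinct $\theta$ yields the stated right-hand side, provided the angle classes are correctly indexed by primitive ideals.

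The last bookkeeping step is where I expect the most care to be needed: identifying each angle class in $\mathfrak A_j$ with a unique primitive ideal $\mathfrak a$ of smallest norm. Here I would write a chosen generator $\beta=c+d\omega_q$ of any $\mathfrak b\in\mathfrak A_j$ as $\beta=g(c'+d'\omega_q)$ with $g=\gcd(c,d)$, producing a primitive $\mathfrak a=(c'+d'\omega_q)$ with $t_\mathfrak a=t_\mathfrak b$, the same norm sign (so $\mathfrak a\in\mathfrak A_j$), and $\N(\mathfrak a)\le\N(\mathfrak b)\le N$; hence every ideal with a given angle in $\mathfrak A_j$ is of the form $(g)\mathfrak a$. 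Conversely, if two primitive ideals in the same class share an angle, then $|\alpha/\tilde\alpha|=|\alpha'/\tilde\alpha'|$ forces $\alpha\tilde{\alpha'}=\pm\alpha'\tilde\alpha$: the $+$ sign gives $\alpha/\alpha'\in\mathbb Q$ and thus $\mathfrak a=\mathfrak a'$, while the $-$ sign gives $\alpha/\alpha'\in\sqrt q\,\mathbb Q$ and hence opposite norm signs, which is impossible within a single class. This is precisely why the split into $\mathfrak A_1$ and $\mathfrak A_2$ is necessary: a single angle may be shared between the two classes, but never by two distinct primitive ideals inside one of them. With this parametrization, $\sum_\theta|a_{j,\theta}|^2$ equals $\sum_{\mathfrak a\ \mathrm{primitive},\,\mathfrak a\in\mathfrak A_j,\,\N(\mathfrak a)\le N}\big(\sum_{t_\mathfrak b=t_\mathfrak a,\,\mathfrak b\in\mathfrak A_j,\,\N(\mathfrak b)\le N}|c(\mathfrak b)|\N(\mathfrak b)^{-1/2}\big)^2$, and summing the two classes completes the proof.
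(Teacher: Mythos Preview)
Your proposal is correct and follows essentially the same route as the paper: split the ideal sum into the two classes $\mathfrak A_1,\mathfrak A_2$, group ideals in each class by their angle (equivalently, by the underlying primitive ideal), invoke Lemma~\ref{lemma:angle-gap}(ii) to get $1/N$-spacing of the distinct frequencies, and apply the Montgomery--Vaughan inequality. The only cosmetic difference is in the uniqueness step: the paper verifies that two distinct primitive ideals in the same $\mathfrak A_j$ cannot share an angle by a direct coefficient comparison (from $\frac{a_1+b_1\omega_q}{a_1+b_1\tilde\omega_q}=\frac{a_2+b_2\omega_q}{a_2+b_2\tilde\omega_q}$ one reads off $a_1/b_1=a_2/b_2$), whereas you argue via the norm-sign dichotomy $\alpha\tilde{\alpha'}=\pm\alpha'\tilde\alpha$; both reach the same conclusion.
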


\begin{proof}
  Let
  \[
    S_j(u) := \sum_{\substack{1\leq \N(\mathfrak a)\leq u \\ \mathfrak{a}\in\mathfrak{A}_j}} \Xi_k(\mathfrak a)
    c(\mathfrak a) \N(\mathfrak a)^{-1/2}, \quad j=1,2.
  \]
  By the triangle inequality, we have
  \[
    \sum_{1\leq k \leq K}  \Big|\sum_{\N(\mathfrak a)\leq N} \Xi_k(\mathfrak a) c(\mathfrak a) \N(\mathfrak a)^{-1/2}\Big|^{2}
    \ll \sum_{j=1,2}\sum_{1\leq k \leq K} \Big| S_j(N) \Big|^2.
  \]
  So we only need to consider the following summation
  \begin{equation*}
    \sum_{1\leq k \leq K} \Big| S_j(N) \Big|^2
    = \sum_{1\leq k\leq K} \Big| \sum_{\substack{\N(\mathfrak a)\leq N \\ \mathfrak{a}\in\mathfrak{A}_j}}
    c(\mathfrak a)\N(\mathfrak a)^{-1/2} e^{\frac{\pi i k t_\mathfrak a}{\log \epsilon_q}} \Big|^2.
  \end{equation*}
  We rearrange the innermost sum, getting
  \[
    \sum_{1\leq k \leq K} \Big| S_j(N) \Big|^2
    = \sum_{1\leq k\leq K} \Big| \sum_{\substack{\mathfrak a \ \textrm{primitive} \\ \mathfrak{a}\in\mathfrak{A}_j \\ \N(\mathfrak a)\leq N}}
    \Big(\sum_{\substack{t_\mathfrak b = t_\mathfrak a \\ \mathfrak{b}\in\mathfrak{A}_j  \\ \N(\mathfrak b) \leq N}}
    c(\mathfrak b)\N(\mathfrak b)^{-1/2}\Big)
    e^{\frac{\pi i k t_\mathfrak b}{\log \epsilon_q}} \Big|^2.
  \]

  Note that for two different primitive integral ideals $\mathfrak a_1$
  and $\mathfrak a_2$ in the same set $\mathfrak{A}_j$ with $\N(\mathfrak a_l)\leq N$, $l=1,2$,
  we have $t_{\mathfrak a_1} \neq t_{\mathfrak a_2} \ \mod 2\log \epsilon_q$.
  Indeed, we can choose a generator $\alpha_l=a_l+b_l\omega_q$ of $\mathfrak{a}_l$ such that
  $t_{\alpha_l}\in(-\log \epsilon_q,\log \epsilon_q]$ and $\N(\alpha_1)\N(\alpha_2)>0$. If we assume $t_{\mathfrak a_1} \equiv t_{\mathfrak a_2} \ \mod 2\log \epsilon_q$, then $t_{\alpha_1}=t_{\alpha_2}$. Hence $\frac{a_1+b_1\omega_q}{a_1+b_1\tilde\omega_q}=\frac{a_2+b_2\omega_q}{a_2+b_2\tilde\omega_q}$.
  By the coefficient of $\sqrt{q}$ we get $\frac{a_1}{b_1}=\frac{a_2}{b_2}$. This contradicts to the condition that $\mathfrak a_1$ and $\mathfrak a_2$ are two different primitive integral ideals.

  Now by Lemma \ref{lemma:angle-gap}, we have
  $\big\| \frac{1}{2\log\epsilon_q}(t_\mathfrak a - t_\mathfrak b) \big\|_\mathbb Z
		  \gg
		   \frac{1}{N}$.
  By Montgomery--Vaughan's inequality \cite[eq. (2.3)]{montgomery1973large}, we have
  \[
    \sum_{1\leq k \leq K} \Big| S_j(N) \Big|^2
    \ll  (K+N) \sum_{\substack{\mathfrak a \ \mathrm{primitive} \\ \mathfrak{a}\in\mathfrak{A}_j \\ \N(\mathfrak a)\leq N}}
    	\Big(\sum_{\substack{t_\mathfrak b = t_\mathfrak a \\ \mathfrak{b}\in\mathfrak{A}_j \\ \N(\mathfrak b)\leq N}}
			|c(\mathfrak b)|\N(\mathfrak b)^{-1/2}\Big)^2.
  \]
  This proves our lemma.
\end{proof}

\subsection{Completion of the proof}

As in the proof of Lemma \ref{lemma:large_sieve}, for two integral ideals $\mathfrak{a}$ and $\mathfrak{b}$ satisfy that
\[
  t_\mathfrak b = t_\mathfrak a , \quad
  \mathfrak{a},\mathfrak{b}\in\mathfrak{A}_j, \quad
  \N(\mathfrak a) \leq N, \quad
  \N(\mathfrak b)\leq N, \quad
  \mathfrak{a} \ \textrm{primitive},
\]
we obtain $\mathfrak{b}=(m)\mathfrak{a}$ for some rational integer $m\geq1$.
Hence for a primitive ideal $\mathfrak{a}$ such that $\mathfrak{a}\in\mathfrak{A}_j$ and $\N(\mathfrak a)\leq N$, if $c(\mathfrak{b})\ll N^{\varepsilon}$ for all $\mathfrak{b}$, we have
\[
  \sum_{\substack{t_\mathfrak b = t_\mathfrak a \\ \mathfrak{b}\in\mathfrak{A}_j \\ \N(\mathfrak b)\leq N}}
			|c(\mathfrak b)|\N(\mathfrak b)^{-1/2}
  \ll N^\varepsilon \N(\mathfrak{a})^{-1/2}.
\]
Together with Lemmas \ref{lemma:AFE} and \ref{lemma:large_sieve}, we get
\[
  \begin{split}
    \frac{1}{K} \sum_{K<k\leq 2K} |L(1/2+it,\phi_k)|^2
    & \ll_{q,\varepsilon} K^{\varepsilon} +
    K^{\varepsilon-1}
    \int_{\varepsilon-iT^{\varepsilon}}^{\varepsilon+iT^{\varepsilon}} \sum_{K<k\leq 2K} \Big|\sum_{\substack{\mathfrak{a}\subset \mathcal{O}\\\N(\mathfrak{a})\leq K^{1+\varepsilon}}} \frac{\Xi_k(\mathfrak{a})}{\N(\mathfrak{a})^{1/2+it+s}}\Big|^2 |\dd s|
     \\
    & \ll_{q,\varepsilon} K^{\varepsilon} +
     K^{\varepsilon}\sum_{j=1,2} \sum_{\substack{\mathfrak a \ \mathrm{primitive} \\ \mathfrak{a}\in\mathfrak{A}_j \\ \N(\mathfrak a)\leq K^{1+\varepsilon}}}
    	\Big(\sum_{\substack{t_\mathfrak b = t_\mathfrak a \\ \mathfrak{b}\in\mathfrak{A}_j \\ \N(\mathfrak b)\leq K^{1+\varepsilon}}}
			\N(\mathfrak b)^{-1/2-\varepsilon}\Big)^2 \\
    & \ll_{q,\varepsilon} K^{\varepsilon} +
     K^{\varepsilon} \sum_{\N(\mathfrak a) \leq K^{1+\varepsilon}} \N(\mathfrak a)^{-1}
     \ll_{q,\varepsilon}  K^{\varepsilon}.
  \end{split}
\]
This completes the proof of Theorem \ref{thm:second_moment}.


\section{Nodal domains}\label{sec:nodaldomains}

We now prove Theorem \ref{thm:nodal_domain}. We first recall the following sharp lower bound for the $L^2$-norm of restriction to any fixed geodesic segment.

\begin{lemma}\label{lemma:rL2>>}
  Let $\beta\subset \delta=\left\{iy:y>0\right\}$ be any fixed compact geodesic segment. Then
  \[
    \|\phi_k\|^2_{L^2\left(\beta\right)}
    := \int_{\beta} |\phi_k(z)|^2 \dd s
    \gg_{\beta} \|\phi_k\|_2^2.
  \]
\end{lemma}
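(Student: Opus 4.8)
The plan is to prove the lower bound $\|\phi_k\|_{L^2(\beta)}^2 \gg_\beta \|\phi_k\|_2^2$ by exploiting the explicit Fourier--Bessel expansion of $\phi_k$ along the geodesic $\delta = \{iy : y > 0\}$. On this vertical geodesic we have $x = 0$, so $\cos(2\pi n x) = 1$ and the series collapses to
\[
  \phi_k(iy) = 2 y^{1/2} \sum_{n=1}^\infty a_k(n) K_{iT_k}(2\pi n y).
\]
First I would parametrize $\beta$ by $y \in [y_1, y_2]$ for fixed $0 < y_1 < y_2$, recalling that on $\delta$ the hyperbolic arclength is $\dd s = \dd y / y$, so that
\[
  \|\phi_k\|_{L^2(\beta)}^2 = \int_{y_1}^{y_2} |\phi_k(iy)|^2 \frac{\dd y}{y}.
\]
The strategy is to restrict attention to a subrange of $y$ where the leading term $n = 1$ of the Bessel series dominates, and to show that this single term already contributes an amount comparable to $\|\phi_k\|_2^2$.

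The key heuristic is that the $K$-Bessel function $K_{iT_k}(u)$ is oscillatory (not exponentially decaying) in the transition range $u < T_k$ and carries its main mass near the turning point $u \approx T_k$; more usefully for a fixed compact $\beta$ with $y \asymp 1$, the argument $2\pi n y$ stays bounded while the order $T_k \to \infty$, placing us deep in the regime $K_{iT_k}(u) \approx \sqrt{\tfrac{2\pi}{T_k}} e^{-\pi T_k/2} \cos(\cdots)$. Thus the next step is to invoke the uniform asymptotics for $K_{iT}(u)$ with $u$ fixed and $T \to \infty$, which give $|K_{iT_k}(2\pi n y)| \sim C(ny)\, T_k^{-1/2} e^{-\pi T_k /2}$ up to an oscillating factor. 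Comparing with the Rankin--Selberg evaluation \eqref{eqn:L^2}, namely $\|\phi_k\|_2^2 = (qT_k)^{o(1)} e^{-\pi T_k}$, one sees that the expected size of $|\phi_k(iy)|^2$ for $y \asymp 1$ is of order $T_k^{-1+o(1)} e^{-\pi T_k}$, consistent with the claim once the $y$-integral is taken over an interval whose contribution does not collapse due to oscillation.

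The main obstacle is the potential cancellation from the oscillatory cosine factor in the Bessel asymptotics: one cannot simply bound $|\phi_k(iy)|^2$ pointwise from below, because the phase $\propto T_k$ forces sign changes that could make the pointwise value vanish on a dense set. The standard remedy, which I would adopt following \cite{GhoshReznikovSarnak2013nodal}, is to integrate $|\phi_k(iy)|^2$ over $y$ and extract a lower bound from the \emph{average} rather than a pointwise estimate; after opening the square and integrating in $y$, the diagonal term $n = m$ produces a positive main term of the correct size $\gg \|\phi_k\|_2^2$, while the off-diagonal terms $n \neq m$ and the oscillatory integrals in $y$ are controlled by nonstationary phase and the rapid decay of $K_{iT_k}(2\pi n y)$ in $n$ (so that the tail $n \geq 2$ is negligible relative to $n = 1$). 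The delicate point is ensuring the oscillation in $y$ within the diagonal term does not destroy the lower bound; this is handled by choosing the interval $[y_1,y_2]$ long enough in the hyperbolic metric that it spans $\gg T_k$ oscillations, so that $\int_{y_1}^{y_2} \cos^2(\text{phase})\, \dd y/y$ averages to a constant multiple of the interval length, yielding the desired constant $c_\beta > 0$ depending only on $\beta$.
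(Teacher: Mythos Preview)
Your proposal contains a genuine gap. The claim that ``the tail $n\ge 2$ is negligible relative to $n=1$'' is false for a \emph{fixed} compact segment $\beta$ with $y$ bounded. The Bessel function $K_{iT_k}(u)$ is oscillatory, with amplitude $\asymp T_k^{-1/2}e^{-\pi T_k/2}$, throughout the range $u<T_k$; it only decays once $u>T_k$. Hence for $y\in[y_1,y_2]$ with $y_1,y_2$ fixed, every term $n\lesssim T_k/(2\pi y_2)$ --- roughly $T_k$ terms in all --- contributes at the same order as $n=1$. The argument you are recalling from \cite{GhoshReznikovSarnak2013nodal} operates \emph{high in the cusp}, where $y\asymp T_k$, so that $2\pi n y>T_k$ already for $n\ge 2$ and the Bessel tail genuinely is exponentially small; that mechanism is unavailable here.

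If one corrects this and keeps all $n\lesssim T_k$, the diagonal after opening the square does produce the right main term (using $\sum_{n\le T_k}a_k(n)^2\asymp T_k$), but the off-diagonal $\sum_{m\neq n}a_k(m)a_k(n)\int K_{iT_k}(2\pi m y)K_{iT_k}(2\pi n y)\,\dd y/y$ is now a shifted-convolution / restricted-QUE problem of essentially the same depth as the lemma itself; nonstationary phase alone does not beat it down below the diagonal. The paper instead bypasses this entirely: it invokes the fact that QUE is known for the sequence $\{\phi_k\}$ (via the subconvexity bound of Liu--Ye \cite{LiuYe2002subconvexity} for the relevant Rankin--Selberg $L$-functions), and then applies the general result of Jang--Jung \cite[Corollary~3.2]{JangJung2018quantum} that QUE implies the restricted $L^2$ lower bound along any fixed geodesic segment. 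That route is soft but relies on deep input; a direct Fourier-expansion proof would have to reproduce that input in some form.
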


\begin{proof}
  By \cite[Theorem 1.3]{LiuYe2002subconvexity}, we know that QUE holds for the sequence of eigenfunctions $\{\phi_k\}$. Then by \cite[Corollary 3.2]{JangJung2018quantum}, the lemma is proved.
\end{proof}

Fix a geodesic segment $\beta \subset \left\{iy:y>0\right\}$,
and assume that it is given by $\{iy:a<y<b\}$.
Let 
\[
  M\left(\phi_k\right) := \frac{1}{\|\phi_k\|_2} \sup_{a < \alpha_1 <\alpha_2 <b} \left|\int_{\alpha_1}^{\alpha_2} \phi_k\left(iy\right) \frac{\dd y}{y}\right|.
\]
We will prove the following lemma.
\begin{lemma} \label{lemma:M<<}
  We have
  \[
    \sum_{K<k\leq 2K} M(\phi_k)^2
    \ll_{q,\varepsilon} K^\varepsilon .
  \]
  Therefore, we have
  \[ M(\phi_k) \ll T_k^{-1/2+\varepsilon} , \]
  for all but $O_{q,\varepsilon}(K^{1-\varepsilon})$ forms in $\{\phi_k:K<k\leq 2K\}$.
\end{lemma}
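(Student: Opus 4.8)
The plan is to convert the partial integral of $\phi_k$ along $\beta$ into a contour integral of the Hecke $L$-function on the critical line, and then to average over the family $K<k\le 2K$ by means of the second moment bound of Theorem~\ref{thm:second_moment}. First I would restrict the Fourier expansion to the geodesic, so that $\phi_k(iy)=2y^{1/2}\sum_{n\ge1}a_k(n)K_{iT_k}(2\pi ny)$, and use the Mellin transform $\int_0^\infty K_\nu(y)y^{s-1}\,\dd y=2^{s-2}\Gamma(\tfrac{s+\nu}{2})\Gamma(\tfrac{s-\nu}{2})$ to obtain
\[
  \int_0^\infty \phi_k(iy)\,y^{s-1}\,\dd y=\gamma_k(s)\,L(s+\tfrac12,\phi_k),
\]
where $\gamma_k(s)$ is an explicit product of two Gamma functions, essentially the archimedean factor $L_\infty(s+\tfrac12,\phi_k)$. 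Since $\phi_k$ is cuspidal it decays at both cusps $0$ and $\infty$, so this transform is entire and of rapid decay on vertical lines; inverting on $\Re s=0$ and integrating term by term yields
\[
  \int_{\alpha_1}^{\alpha_2}\phi_k(iy)\,\frac{\dd y}{y}
  =\frac{1}{2\pi}\int_{-\infty}^{\infty}\gamma_k(it)\,L(\tfrac12+it,\phi_k)\,\frac{\alpha_1^{-it}-\alpha_2^{-it}}{it}\,\dd t .
\]
Because $\bigl|\tfrac{\alpha_1^{-it}-\alpha_2^{-it}}{it}\bigr|=\bigl|\int_{\alpha_1}^{\alpha_2}y^{-it-1}\,\dd y\bigr|\ll_\beta\min(1,1/|t|)$ uniformly for $\alpha_1,\alpha_2\in[a,b]$, the supremum over $\alpha_1,\alpha_2$ can be taken inside the integral.

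Next I would insert Stirling's formula. For $|t|\le T_k$ the two Gamma factors carry a common factor $e^{-\pi T_k/2}$ which, by the Rankin--Selberg evaluation \eqref{eqn:L^2}, exactly cancels $\|\phi_k\|_2\asymp(qT_k)^{o(1)}e^{-\pi T_k/2}$, while for $|t|>T_k$ there is further exponential decay. Setting $P_k(t):=(1+|t+T_k|)^{-1/4}(1+|t-T_k|)^{-1/4}$ for the normalized archimedean weight, I obtain, up to a negligible tail from $|t|>T_k+T_k^\varepsilon$,
\[
  M(\phi_k)\ll (qT_k)^{o(1)}\int_{-\infty}^{\infty}\bigl|L(\tfrac12+it,\phi_k)\bigr|\,P_k(t)\,\min(1,1/|t|)\,\dd t .
\]
The decisive elementary estimate is $\int_{-\infty}^{\infty}P_k(t)\min(1,1/|t|)\,\dd t\ll T_k^{-1/2+\varepsilon}$, which is the source of the saving. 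By Cauchy--Schwarz,
\[
  M(\phi_k)^2\ll T_k^{-1/2+\varepsilon}\int_{-\infty}^{\infty}\bigl|L(\tfrac12+it,\phi_k)\bigr|^2 P_k(t)\min(1,1/|t|)\,\dd t ,
\]
and summing over $K<k\le2K$ and interchanging the sum and integral reduces the problem to bounding $\sum_{K<k\le2K}|L(\tfrac12+it,\phi_k)|^2P_k(t)$ integrated against $\min(1,1/|t|)$.

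In the bulk range $|t|\le K^{1-\varepsilon}$ this is immediate: there $P_k(t)\asymp K^{-1/2}$ for every $k$ in the family, and Theorem~\ref{thm:second_moment} gives $\sum_{K<k\le2K}|L(\tfrac12+it,\phi_k)|^2\ll K^{1+\varepsilon}$, so this range contributes $\ll K^{\varepsilon}$ to $\sum_k M(\phi_k)^2$. The main obstacle is the high-frequency range $K^{1-\varepsilon}<|t|\ll T_k$, which lies outside the range of validity of Theorem~\ref{thm:second_moment} and where a convexity bound alone loses a factor $K$; moreover near $t=\pm T_k$ the analytic conductor of $L(s,\phi_k)$ drops from $\asymp T_k^2$ (at $t=0$) down to $\asymp T_k$. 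I expect to resolve this by decomposing the $k$-sum dyadically according to the size of $|T_k-t|$: for the $O(1)$ forms with $|T_k-t|\ll K^\varepsilon$ the conductor is $\asymp T_k$ and convexity $|L(\tfrac12+it,\phi_k)|\ll K^{1/4+\varepsilon}$ suffices, while for larger $|T_k-t|$ I would extend the proof of Theorem~\ref{thm:second_moment}, using an approximate functional equation of length $\asymp (T_k(1+|T_k-t|))^{1/2}$ in the spirit of Lemma~\ref{lemma:AFE} (the conductor drop only shortens the relevant Dirichlet polynomial) together with the large sieve of Lemma~\ref{lemma:large_sieve} and the repulsion of angles from Lemma~\ref{lemma:angle-gap}. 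This should yield $\sum_{K<k\le2K}|L(\tfrac12+it,\phi_k)|^2 P_k(t)\ll K^{1/2+\varepsilon}$ uniformly, so that the high range also contributes $\ll K^{\varepsilon}$ and $\sum_{K<k\le2K}M(\phi_k)^2\ll_{q,\varepsilon}K^{\varepsilon}$ follows.

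Finally, the pointwise assertion follows by Markov's inequality. Since $T_k\asymp K$, any $k$ with $M(\phi_k)>T_k^{-1/2+\varepsilon}$ contributes more than $\asymp K^{-1+2\varepsilon}$ to the sum, so the number of such $k$ is $\ll K^{\varepsilon}/K^{-1+2\varepsilon}\ll K^{1-\varepsilon}$; hence $M(\phi_k)\ll T_k^{-1/2+\varepsilon}$ for all but $O_{q,\varepsilon}(K^{1-\varepsilon})$ of the forms with $K<k\le2K$.
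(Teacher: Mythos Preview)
Your setup via the Mellin transform, Stirling, and the normalization by $\|\phi_k\|_2$ is exactly what the paper does, and your treatment of the bulk range $|t|\le K^{1-\varepsilon}$ using Cauchy--Schwarz followed by Theorem~\ref{thm:second_moment} matches the paper's handling of its piece $M_1(\phi_k)$.

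The difference is in the high range $K^{1-\varepsilon}<|t|\le 2T_k$. You propose to extend the $k$-average of Theorem~\ref{thm:second_moment} to this range via dyadic decomposition in $|T_k-t|$ and a renewed appeal to the large sieve of Lemma~\ref{lemma:large_sieve}; this is plausible but only sketched, and it requires re-running the approximate functional equation and large sieve with varying conductor. The paper avoids all of this: it splits $M(\phi_k)\ll M_1(\phi_k)+M_2(\phi_k)$ \emph{before} squaring, and then bounds $M_2(\phi_k)$ \emph{individually} for each $k$, with no $k$-average at all. Concretely, Cauchy--Schwarz on $M_2$ with weight $(1+|t-T_k|)^{-1/2}$ reduces to the classical second moment in $t$,
\[
  T_k^{-1/2+\varepsilon}\int_{0}^{2T_k}\bigl|L(\tfrac12+it,\phi_k)\bigr|^{2}(1+|t-T_k|)^{-1/2}\,\dd t\ \ll_{q,\varepsilon}\ T_k^{\varepsilon},
\]
which follows from the approximate functional equation of length $\asymp (T_k(1+|t-T_k|))^{1/2}$ and the Montgomery--Vaughan mean value theorem for Dirichlet polynomials (the paper cites \cite[p.~1547]{GhoshReznikovSarnak2013nodal}). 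This immediately gives $M_2(\phi_k)\ll T_k^{-1/2+\varepsilon}$ for every $k$, hence $\sum_{K<k\le 2K}M_2(\phi_k)^2\ll K^{\varepsilon}$ trivially. In short: your single global Cauchy--Schwarz forces you to control a $k$-sum at large $t$, whereas splitting first and exploiting the $t$-mean value for fixed $k$ makes the high range the easy part. I recommend you replace your speculative extension of Theorem~\ref{thm:second_moment} by this pointwise argument.
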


\begin{proof}
  Let $T=\pi K/\log \epsilon_q$. By Parseval's theorem, Stirling's formula, and \eqref{eqn:L^2}, we have (see e.g. \cite[\S6.2.3]{GhoshReznikovSarnak2013nodal})
  \begin{equation}\label{eqn:M=M1+M2}
    \begin{split}
       M\left(\phi_k\right) & \ll_{q,\varepsilon} T_k^{-1/4+\varepsilon} \int_{0}^{2T_k} \left|L\left(\frac{1}{2}+it,\phi_k\right)\right| \left(1+|t-T_k|\right)^{-1/4} \min\left(1,\frac{1}{t}\right) \dd t + e^{-cT_k}  \\
       & \ll_{q,\varepsilon} M_1(\phi_k) + M_2(\phi_k) + e^{-cT_k},
    \end{split}
  \end{equation}
  where
  \[
    \begin{split}
       M_1(\phi_k) & := T^{-1/2+\varepsilon} \int_0^{T^{1-\varepsilon}}\left|L\left(\frac{1}{2}+it,\phi_k\right)\right| \frac{ \dd t}{1+t}  , \\
       M_2(\phi_k) & := T^{-5/4+\varepsilon} \int_{T^{1-\varepsilon}}^{2T_k}\left|L\left(\frac{1}{2}+it,\phi_k\right)\right| \left(1+|t-T_k|\right)^{-1/4} \dd t.
    \end{split}
  \]

  We first deal with $M_1(\phi_k)$.
  By Theorem \ref{thm:second_moment}, we have
  \begin{equation}\label{eqn:M_1<<}
    \begin{split}
       \sum_{K<k\leq 2K} M_1(\phi_k)^2 & =
       T^{-1+2\varepsilon} \sum_{K<k\leq 2K} \bigg( \int_0^{T^{1-\varepsilon}} \left|L\left(\frac{1}{2}+it,\phi_k\right)\right| \frac{ \dd t}{1+t} \bigg)^2 \\
         & \ll T^{-1+2\varepsilon}  \int_0^{T^{1-\varepsilon}} \bigg( \sum_{K<k\leq 2K} \left|L\left(\frac{1}{2}+it,\phi_k\right)\right|^2 \bigg) \frac{ \dd t}{1+t}  \\
         & \ll_{q,\varepsilon} T^{\varepsilon}.
    \end{split}
  \end{equation}

  Now we consider $M_2(\phi_k)$.
  By Cauchy--Schwarz inequality, we have
  \[
    M_2(\phi_k) \ll T_k^{-3/4+\varepsilon} \bigg( \int_{T^{1-\varepsilon}}^{2T_k}\left|L\left(\frac{1}{2}+it, \phi_k\right)\right|^2 \left(1+|t-T_k|\right)^{-1/2} \dd t \bigg)^{1/2}.
  \]
  Now by using the approximate functional equation (see \cite[p. 1547]{GhoshReznikovSarnak2013nodal}), we have
  \[
    T_k^{-1/2+\varepsilon} \int_{0}^{2T_k} \left|L\left(\frac{1}{2}+it, \phi_k\right)\right|^2 \left(1+|t-T_k|\right)^{-1/2} \dd t \ll_{q,\varepsilon} T_k^\varepsilon.
  \]
  Hence, we obtain
  \begin{equation*}
    M_2(\phi_k) \ll_{q,\varepsilon} T_k^{-1/2+\varepsilon}.
  \end{equation*}
  Thus by $T\asymp T_k \asymp K$, we have
  \begin{equation}\label{eqn:M_2<<}
    \sum_{K<k\leq 2K} M_2(\phi_k)^2 \ll_{q,\varepsilon} K^{\varepsilon}.
  \end{equation}
  Combining \eqref{eqn:M=M1+M2}, \eqref{eqn:M_1<<}, and \eqref{eqn:M_2<<}, we prove the first claim in the lemma.

  Note that
  \[
    \sum_{\substack{K<k \leq 2K \\ M(\phi_k)\gg T_k^{-1/2+\varepsilon}}} 1
    \ll T^{1-2\varepsilon} \sum_{\substack{K<k \leq 2K \\ M(\phi_k)\gg T_k^{-1/2+\varepsilon}}} M(\phi_k)^2
    \leq T^{1-2\varepsilon} \sum_{K<k \leq 2K } M(\phi_k)^2
    \ll_{q,\varepsilon} K^{1-\varepsilon}.
  \]
  This proves our lemma.
\end{proof}

Let $S_\beta(\phi_k)$ be the number of sign changes of $\phi_k$ along $\beta$. Denote by $a<\xi_1(\phi_k) < \xi_2(\phi_k) < \ldots < \xi_{S_\beta(\phi_k)}(\phi_k)<b$ the zeros of $\phi_k(iy)$ on the interval $(a,b)$ where $\phi_k(iy)$ changes sign. Put $\xi_0(\phi_k) = a$ and $\xi_{S_\beta(\phi_k)+1}=b$. Then we have
\begin{align*}
  \int_a^b |\phi_k(iy)| \frac{\dd y}{y} &
    = \sum_{j=1}^{S_\beta(\phi_k)+1} \left|\int_{\xi_{j-1}(\phi_k)}^{\xi_{j}(\phi_k)}\phi_k(iy) \frac{\dd y}{y} \right|
  \leq \sum_{j=1}^{S_\beta(\phi_k)+1} M(\phi_k) \|\phi_k\|_2,
\end{align*}
hence
\[
  \|\phi_k\|_{L^1\left(\beta\right)} \leq M\left(\phi_k\right)  \|\phi_k\|_2 \left(S_\beta\left(\phi_k\right)+1\right),
\]
By \cite[Eq. (6) and Remark 2.2]{GhoshReznikovSarnak2013nodal} and \cite[\S 2.5]{GhoshReznikovSarnak2017nodal}, we have
\[
  N^\beta \left(\phi_k\right) \geq \frac{1}{2} S_\beta\left(\phi_k\right) - g +1,
\]
where $g$ is the genus of the surface $\Gamma_0(q)\backslash\mathbb{H}$.
(Ghosh--Reznikov--Sarnak \cite{GhoshReznikovSarnak2013nodal} gave a proof of the above inequality for $\SL_2(\mathbb{Z})\backslash \mathbb{H}$, and in \cite{GhoshReznikovSarnak2017nodal} they gave a proof for a compact surface of genus $g\geq0$. Since the proof is local, it is valid for non-compact surfaces in our case as mentioned in \cite{GhoshReznikovSarnak2013nodal,GhoshReznikovSarnak2017nodal}. See also \cite[\S6]{GhoshReznikovSarnak2017nodal} for a detailed discussion of the geometric conditions in the case $\Gamma_0(q)$.)
Note that by the Gauss--Bonnet formula we have $g\ll \vol(\Gamma_0(q)\backslash\mathbb{H}) = \frac{\pi}{3} (q+1)$ (see e.g. \cite[Eq. (2.17)]{iwaniec1997topics}). Hence we get
\[
  N^\beta \left(\phi_k\right) \gg_q S_\beta\left(\phi_k\right).
\]
%
Note that 
\[
  \|\phi_k\|^2_{L^2\left(\beta\right)} = \int_{\beta} |\phi_k(z)|^2 \dd s
  \ll \|\phi_k\|_\infty  \int_a^b |\phi_k(iy)| \frac{\dd y}{y}
  = \|\phi_k\|_\infty \|\phi_k\|_{L^1\left(\beta\right)} .
\]
Together with Theorem \ref{thm:sup-norm} and Lemmas \ref{lemma:rL2>>} and \ref{lemma:M<<}, for all but $O(K^{1-\varepsilon})$ forms in $\{\phi_k:K<k\leq 2K\}$, we have
\[
  N^\beta \left(\phi_k\right)
  \gg S_\beta\left(\phi_k\right)
  \gg \frac{\|\phi_k\|_{L^1\left(\beta\right)}}{M\left(\phi_k\right)\|\phi_k\|_2}
  \gg \frac{\|\phi_k\|^2_{L^2\left(\beta\right)}}{ M\left(\phi_k\right) \|\phi_k\|_2 \|\phi_k\|_\infty }
  \gg T^{1/8-\varepsilon}.
\]
This completes the proof of Theorem \ref{thm:nodal_domain}.



%


\end{document}